\definecolor{webgreen}{rgb}{0,.5,0}
\definecolor{webbrown}{rgb}{.6,0,0}
\numberwithin{equation}{section}
\newcommand{\seqnum}[1]{\href{http://oeis.org/#1}{\underline{#1}}}
\begin{document}

%\begin{center}
%\epsfxsize=4in
%\leavevmode\epsffile{logo129.eps}
%\end{center}

\theoremstyle{plain}
\newtheorem{theorem}{Theorem}
\newtheorem{corollary}[theorem]{Corollary}
\newtheorem{lemma}{Lemma}
\theoremstyle{remark}
\newtheorem{remark}[theorem]{Remark}
\theoremstyle{plain}
\newtheorem*{example}{Examples}

\newcommand{\lrf}[1]{\left\lfloor #1\right\rfloor}

\begin{center}
\vskip 1cm{\LARGE\bf Fibonacci-Catalan Series \\
\vskip .11in }

\vskip 1cm

{\large

Kunle Adegoke \\
Department of Physics and Engineering Physics, \\ Obafemi Awolowo University, 220005 Ile-Ife, Nigeria \\
\href{mailto:adegoke00@gmail.com}{\tt adegoke00@gmail.com}

\vskip 0.2 in

Robert Frontczak\footnote{Statements and conclusions made in this article by R. F. are entirely those of the author. They do not necessarily reflect the views of LBBW.} \\
Landesbank Baden-W\"urttemberg (LBBW), Stuttgart,  Germany \\
\href{mailto:robert.frontczak@lbbw.de}{\tt robert.frontczak@lbbw.de}

\vskip 0.2 in

Taras Goy  \\
Faculty of Mathematics and Computer Science\\
Vasyl Stefanyk Precarpathian National University, Ivano-Frankivsk, Ukraine\\
\href{mailto:taras.goy@pnu.edu.ua}{\tt taras.goy@pnu.edu.ua}}
\end{center}

\vskip .2 in

\begin{abstract}
We study certain series with Catalan numbers and reciprocal Catalan numbers, respectively,
and provide seemingly new closed form evaluations of these series with Fibonacci (Lucas) entries.
In addition, we state some combinatorial sums that can be inferred from the series.
\end{abstract}

\section{Introduction and Motivation}

The famous Catalan numbers $C_n, n\geq 0,$ are defined by $C_n = \frac{1}{n+1}\binom {2n}{n}$.
The numbers are indexed as sequence \seqnum{A000108} in the On-Line Encyclopedia of Integer Sequences \cite{OEIS}.
They have the generating function \cite{koshy2,stanley}
\begin{equation*}
G(z) = \sum_{n=0}^\infty C_n z^n = \frac{1-\sqrt{1-4z}}{2z}
\end{equation*}
and possess, among other fascinating properties, the integral representations \cite{qiguo}
\begin{equation*}
C_n = \frac{1}{2\pi} \int_0^4 z^{n}\sqrt{\frac{4-z}{z}} dz \qquad\mbox{and}\qquad C_n = \frac{1}{\pi} \int_0^2 z^{2n}\sqrt{4-z^2} dz.
\end{equation*}
Consult also the articles by Dana-Picard \cite{dana1,dana2,dana3} for these representations.
The reciprocals of Catalan numbers are generated by the function
\begin{equation*}
f(z) = \sum_{n=0}^\infty \frac{z^n}{C_n}, \qquad |z| < 4,
\end{equation*}
which can be expressed as
\begin{equation}\label{amde1}
f(z) = \frac{2(z+8)}{(4-z)^2} + \frac{24\sqrt{z} \arcsin(\sqrt{z}/2)}{(4-z)^{5/2}}.
\end{equation}
The function $f(z)$ was studied in detail recently by Amdeberhan et al. \cite{amde} and Koshy and Gao \cite{koshygao}.
It also appears in the article by Yin and Qi \cite{yinqi} and is linked to an interesting problem proposed
by Beckwith and Harbor in the AMM \cite{beckwith}. Reciprocals of Catalan numbers possess
the following integral representation derived by Qi and Guo \cite{qiguo}
\begin{equation*}
\frac{1}{C_n} = \frac{(2n+3)(2n+2)(2n+1)}{2^{4n+4}} \int_0^2 z^{2n+1}\sqrt{4-z^2} dz.
\end{equation*}

\medskip

Corresponding to $G(z)$ are the sub-series $G_1(z)$, $G_2(z)$, $G_3(z)$ and $G_4(z)$, namely,
\[
G_1 (z) = \sum_{n = 1}^\infty  {C_{2n - 1} \frac{{z^{2n - 1} }}{{4^{2n - 1} }}}  = \frac{2}{z} - \frac{{(\sqrt {1 + z}  + \sqrt {1 - z} )}}{z},
\]

\[
G_2 (z) = \sum_{n = 0}^\infty  {C_{2n} \frac{{z^{2n} }}{{4^{2n} }}}  = \frac{{\sqrt {1 + z}  - \sqrt {1 - z} }}{z},
\]

\[
G_3(z) = 2\sum_{n = 1}^\infty  {( - 1)^{n - 1} C_{2n - 1} \frac{{z^n }}{{4^{2n} }}}  = (1 + z)^{1/4} \cos \left( {\frac{1}{2}\arctan \sqrt z } \right) - 1,\quad|z|\le1,
\]

\[
G_4 (z) = 2\sum_{n = 0}^\infty  {( - 1)^n C_{2n} \frac{{z^n }}{{4^{2n + 1} }}}  = \frac{{(1 + z)^{1/4} }}{{\sqrt z }}\sin \left( {\frac{1}{2}\arctan \sqrt z } \right),\quad|z|\le1.
\]
Since
\[
\cos\Big(\frac12\arctan(\sqrt{p})\Big) =\sqrt{\frac{\sqrt{1+p}+1}{2\sqrt{1+p}}},\quad\sin\Big(\frac12\arctan(\sqrt{p})\Big) =\sqrt{\frac{\sqrt{1+p}-1}{2\sqrt{1+p}}},
\]
we have the more compact formulas
\[
G_3(z) = 2\sum_{n = 1}^\infty {( - 1)^{n - 1} C_{2n - 1} \frac{{z^n }}{{4^{2n} }}} = \sqrt{\frac{\sqrt{1+z}+1}{2}}-1
\]
and
\[
G_4 (z) = 2\sum_{n = 0}^\infty {( - 1)^n C_{2n} \frac{{z^n }}{{4^{2n + 1} }}} = \sqrt{\frac{\sqrt{1+z}-1}{2z}}.
\]
Our purpose in this paper is to study $G(z)$, $G_1(z)$, $G_2(z)$, $G_3(z)$, $G_4(z)$, $f(z)$ and the following similar series

\[
X(z) = \sum_{n = 1}^\infty {\frac{{z^n }}{{n(n + 1)C_n }}} ,\quad Y(z) = \sum_{n = 1}^\infty {\frac{{z^n }}{{n^2 (n + 1)C_n }}},\quad |z|<4,
\]

\[
W(z) = \sum_{n = 0}^\infty {\frac{{C_n }}{{2^{2n + 1} }}\frac{{z^{2n + 2} }}{{2n + 1}}} ,\quad |z|<1,
\]

focusing mainly on delivering new Fibonacci-Catalan relations. Similar series were studied by Qi and Guo \cite[Section 7]{qiguo} and Stewart \cite[Section 4]{stewart21}.
Other recently published works on infinite sums with (reciprocal) Catalan numbers and central binomial coefficients
include the articles by Zhao and Wang \cite{zhao}, Sprugnoli \cite{sprug}, Chu and Zheng \cite{chu2}, Boyadzhiev \cite{boy1,boy2},
Chen \cite{chen}, Frontczak et al. \cite{fro2} and Uhl \cite{uhl}. Some Fibonacci-Catalan series were also derived, among other results, in a recent article by Adegoke et al.\cite[Section 2]{KA_RF_TG_Catalan2}.

\medskip

We recall that Fibonacci numbers $F_n$ and the companion sequence of Lucas numbers $L_n$ are defined for $n\geq 0$
as $F_{n+2} = F_{n+1} + F_n$ and $L_{n+2} = L_{n+1} + L_n$ with initial conditions $F_0 = 0, F_1 = 1$,
$L_0=2$ and $L_1=1$, respectively. These sequences have ids \seqnum{A000045} and \seqnum{A000032} in the
On-Line Encyclopedia of Integer Sequences \cite{OEIS}. The Binet formulas are given by
\begin{equation*}
F_n = \frac{\alpha^n-\beta^n}{\alpha-\beta}, \qquad L_n = \alpha^n + \beta^n,
\end{equation*}
where $\alpha$ is the golden ratio, i.e., $\alpha = \frac{1+\sqrt{5}}{2}$ and $\beta=-1/\alpha=\frac{1-\sqrt{5}}{2}$.
See \cite{koshy1} for more details.

\medskip

The next lemma will be used frequently.

\begin{lemma}[{\cite[Lemma 1]{adegoke_inv}}, see also {\cite[p.271, identities (20)--(22)]{srivastava12}}]\label{lem.iqkulim}
We have
\begin{gather}
\sin \left( {\frac{\pi }{{10}}} \right) =  - \frac{\beta }{2},\quad\sin \left( {\frac{{3\pi }}{{10}}} \right) = \frac{\alpha }{2} = \alpha ^2 \sin \left( {\frac{\pi }{{10}}} \right),\\
\cos \left( {\frac{\pi }{{10}}} \right) = \frac{{\sqrt {\alpha \sqrt 5 } }}{2},\quad\cos \left( {\frac{{3\pi }}{{10}}} \right) = \frac{{\sqrt { - \beta \sqrt 5 } }}{2} =  - \beta \cos \left( {\frac{\pi }{{10}}} \right),\\
\cot (2\pi /5) =  - \beta ^3 \cot (\pi /5) =  - \beta ^3 \sqrt {\frac{{\alpha ^3 }}{{\sqrt 5 }}}\label{eq.vn4pi2d} .
\end{gather}
\end{lemma}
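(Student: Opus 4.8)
The plan is to derive everything from the single polynomial relation satisfied by $\cos(\pi/5)$, together with the elementary facts $\alpha+\beta=1$, $\alpha\beta=-1$, $\alpha^2=\alpha+1$, $\beta^2=\beta+1$. First I would set $\theta=\pi/5$ and use $3\theta=\pi-2\theta$, so that $\sin3\theta=\sin2\theta$; expanding by the triple- and double-angle formulas and cancelling the factor $\sin\theta\neq0$ gives $3-4\sin^2\theta=2\cos\theta$, hence $4\cos^2\theta-2\cos\theta-1=0$ and $\cos(\pi/5)=\tfrac14(1+\sqrt5)=\alpha/2$ (the negative root being discarded). Complementary angles then give $\sin(3\pi/10)=\cos(\pi/5)=\alpha/2$, while the double-angle formula yields $\cos(2\pi/5)=2\cos^2(\pi/5)-1=\tfrac12\alpha^2-1=\tfrac12(\alpha-1)=-\beta/2$; since $\sin(\pi/10)=\cos(2\pi/5)$, this proves the first line. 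The companion relation $\sin(3\pi/10)=\alpha^2\sin(\pi/10)$ is just the assertion $\alpha/2=-\tfrac12\alpha^2\beta$, which collapses to $\alpha\beta=-1$.

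Next I would obtain the cosines from the Pythagorean identity and the minimal polynomials of $\alpha$ and $\beta$: from $\sin(\pi/10)=-\beta/2$ one gets $4\cos^2(\pi/10)=4-\beta^2=3-\beta=\tfrac12(5+\sqrt5)=\alpha\sqrt5$, and from $\sin(3\pi/10)=\alpha/2$ one gets $4\cos^2(3\pi/10)=4-\alpha^2=3-\alpha=\tfrac12(5-\sqrt5)=-\beta\sqrt5$. Taking positive square roots—admissible because $\pi/10$ and $3\pi/10$ lie in $(0,\pi/2)$—yields $\cos(\pi/10)=\tfrac12\sqrt{\alpha\sqrt5}$ and $\cos(3\pi/10)=\tfrac12\sqrt{-\beta\sqrt5}$. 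The equality $\cos(3\pi/10)=-\beta\cos(\pi/10)$ then follows on squaring, since it reads $-\beta=\alpha\beta^2$ and $\alpha\beta^2=\alpha(\beta+1)=\alpha\beta+\alpha=\alpha-1=-\beta$; both sides of the original equality are positive, so the square root is unambiguous.

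Finally, for \eqref{eq.vn4pi2d} I would compute the two cotangents directly from the closed forms just found. Using $\sin^2(\pi/5)=\cos^2(3\pi/10)=-\beta\sqrt5/4$ and $-\alpha\beta=1$ gives $\cot^2(\pi/5)=\cos^2(\pi/5)/\sin^2(\pi/5)=\alpha^2/(-\beta\sqrt5)=\alpha^3/\sqrt5$, and similarly $\sin^2(2\pi/5)=\cos^2(\pi/10)=\alpha\sqrt5/4$ gives $\cot^2(2\pi/5)=\beta^2/(\alpha\sqrt5)=-\beta^3/\sqrt5$. Since $\beta^6\alpha^3=(\alpha\beta)^3\beta^3=-\beta^3$, this shows $\cot^2(2\pi/5)=\beta^6(\alpha^3/\sqrt5)=(-\beta^3)^2\cot^2(\pi/5)$, and taking positive square roots (both cotangents are positive, the angles being in the first quadrant) produces the entire chain $\cot(2\pi/5)=-\beta^3\cot(\pi/5)=-\beta^3\sqrt{\alpha^3/\sqrt5}$. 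There is no genuine obstacle here beyond keeping the sign conventions straight when extracting square roots: every angle occurring ($\pi/10$, $\pi/5$, $3\pi/10$, $2\pi/5$) lies in $(0,\pi/2)$, so each sine, cosine and cotangent involved is positive, which fixes every branch, and what remains is a short manipulation of the quadratic relations for $\alpha$ and $\beta$.
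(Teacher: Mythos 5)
Your proof is correct in every step: the quadratic $4\cos^2(\pi/5)-2\cos(\pi/5)-1=0$ does give $\cos(\pi/5)=\alpha/2$, the complementary-angle and Pythagorean manipulations all check out against $\alpha^2=\alpha+1$, $\beta^2=\beta+1$, $\alpha\beta=-1$, and your sign bookkeeping (all four angles in $(0,\pi/2)$, so every square root is the positive one) is exactly what is needed to pass from the squared identities to the stated ones. Note, however, that the paper does not prove this lemma at all --- it is quoted verbatim from \cite[Lemma~1]{adegoke_inv} and \cite[p.~271]{srivastava12} --- so there is no argument in the paper to compare against. Your derivation is a clean, self-contained replacement for that citation: it needs only the defining relation $3\theta=\pi-2\theta$ at $\theta=\pi/5$ plus the minimal polynomial of $\alpha$, which is essentially the standard route one would find in the cited sources. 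One tiny presentational point: you use $\sin(\pi/5)=\cos(3\pi/10)$ and $\sin(2\pi/5)=\cos(\pi/10)$ implicitly in the last paragraph; stating these complementary-angle identities explicitly would make the cotangent computation fully transparent.
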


\section{Results from $G(z)$, $G_1(z)$, $G_2(z)$, $G_3(z)$ and $G_4(z)$}

Observe that $G(1/5)$ and $G(-1/5)$ give
\begin{equation}
\sum_{n = 0}^\infty  {\frac{{C_n }}{{5^n }}}  =  - \beta \sqrt 5
\end{equation}
and
\begin{equation}
\sum_{n = 0}^\infty  {\frac{{( - 1)^n C_n }}{{5^n }}}  = \beta ^2 \sqrt 5 ,
\end{equation}
from which we also infer
\begin{equation}
\sum_{n = 0}^\infty  {\frac{{C_{2n} }}{{5^{2n} }}}  = \frac{{\sqrt 5 }}{2}
\end{equation}
and
\begin{equation}
\sum_{n = 1}^\infty  {\frac{{C_{2n - 1} }}{{5^{2n - 1} }}}  = \frac{{ - \beta ^3 \sqrt 5 }}{2}.
\end{equation}
The trigonometric version of the generating function $G(z)$ of the Catalan numbers is
\[
G_t (z) = \sum_{n = 0}^\infty  {C_n \frac{{\sin ^{2n} z}}{{2^{2n} }}}  =\frac1{\cos ^2 \left( {\frac{z}{2}} \right)},\quad 0 < z < \pi /2.
\]
At $z=\pi/2$, $z=\pi/3$, $z=\pi/4$ and $z=\pi/6$, we have
\begin{equation}
\sum_{n = 0}^\infty  {\frac{{C_n }}{{4^n }}}  = 2,\quad\sum_{n = 0}^\infty  {\frac{{3^n C_n }}{{4^{2n} }}}  = \frac{4}{3},
\end{equation}

\begin{equation}
\sum_{n = 0}^\infty  {\frac{{C_n }}{{8^n }}}  = 4 - 2\sqrt 2 ,\quad\sum_{n = 0}^\infty  {\frac{{C_n }}{{4^{2n} }}}  = 4(2 - \sqrt 3 ).
\end{equation}

The identities $G_1(1/\sqrt5)$ and $G_2(1/\sqrt5)$ give
\begin{equation}
\sum_{n = 1}^\infty  {\frac{{C_{2n - 1} }}{{5^n 4^{2n - 1} }}}  = 2 - \sqrt 2 \sqrt {\frac{{\alpha ^3 }}{{\sqrt 5 }}},\quad\sum_{n = 0}^\infty  {\frac{{C_{2n} }}{{5^n 4^{2n} }}}  = \sqrt 2 \sqrt { - \beta ^3 \sqrt 5 }.
\end{equation}

The trigonometric versions of $G_1(z)$ and $G_2(z)$, for $0\le z\le\pi/2$, are
\[
G_{1t} (z) = \sum_{n = 1}^\infty  {C_{2n - 1} \frac{{\sin ^{2n - 1} z}}{{4^{2n - 1} }}}  = \frac{{4\sin ^2 (z/4)}}{{\sin z}}
\]
and
\[
G_{2t} (z) = \sum_{n = 1}^\infty  {C_{2n} \frac{{\sin ^{2n} z}}{{4^{2n} }}}  = \frac{1}{{\cos (z/2)}}-1.
\]

\begin{example}
Evaluating these versions at appropriate arguments yields for instance
\begin{equation}
\sum_{n = 1}^\infty \frac{C_{2n-1}3^n}{2^{6n-3}}  = (\sqrt{3} - 1)^2,\quad\sum_{n = 1}^\infty \frac{C_{2n}}{2^{4n}}  = \sqrt{2},
\end{equation}
\begin{equation}
\sum_{n = 1}^\infty \frac{C_{2n}3^n}{2^{6n}}  = \frac{2 \sqrt{3}}{3},\quad\sum_{n = 1}^\infty \frac{C_{2n}}{2^{6n}}  = \sqrt{2}(\sqrt{3} - 1).
\end{equation}

\end{example}

\begin{lemma}\label{lem.fdf14ud}
We have
\begin{equation}\label{eq.z60zcvo}
\sin ^2 \left( {\frac{{3\pi }}{{20}}} \right) = \frac{1}{2}\left( {1 - \frac{1}{2}\sqrt { - \beta \sqrt 5 } } \right),
\end{equation}
\begin{equation}\label{eq.tqoyixe}
\sin ^2 \left( {\frac{\pi }{{20}}} \right) = \frac{1}{2}\left( {1 - \frac{1}{2}\sqrt {\alpha \sqrt 5 } } \right),
\end{equation}
\begin{equation}\label{eq.leua6ck}
\sin ^2 \left( {\frac{{3\pi }}{{20}}} \right) - \sin ^2 \left( {\frac{\pi }{{20}}} \right) = \frac{{\sqrt { - \beta^3 \sqrt 5 } }}{4},
\end{equation}
\begin{equation}\label{eq.dr721bq}
\sin ^2 \left( {\frac{{3\pi }}{{20}}} \right) = \left( {1 + \sqrt {\alpha \sqrt 5 } } \right)\sin ^2 \left( {\frac{\pi }{{20}}} \right).
\end{equation}
\end{lemma}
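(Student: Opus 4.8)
The plan is to reduce each of the four identities to the trigonometric values recorded in Lemma~\ref{lem.iqkulim} together with the elementary golden-ratio relations $\alpha+\beta=1$, $\alpha\beta=-1$, $\alpha-\beta=\sqrt5$ and $\beta^2=\beta+1$; from these one gets $\alpha\sqrt5=\alpha(\alpha-\beta)=\alpha^2+1=\alpha+2$ and $\beta^3=2\beta+1=2-\sqrt5$, which are the combinations that actually occur. No deep ingredient is needed: the whole statement is a careful manipulation of nested surds.

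For \eqref{eq.z60zcvo} and \eqref{eq.tqoyixe}, I would apply the power-reduction formula $\sin^2\theta=\tfrac12(1-\cos 2\theta)$ with $\theta=3\pi/20$ and $\theta=\pi/20$ (so that $2\theta$ is $3\pi/10$ and $\pi/10$, respectively), and then insert the values $\cos(3\pi/10)=\tfrac12\sqrt{-\beta\sqrt5}$ and $\cos(\pi/10)=\tfrac12\sqrt{\alpha\sqrt5}$ from Lemma~\ref{lem.iqkulim}. Both identities then follow at once.

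For \eqref{eq.leua6ck}, I would use $\sin^2 A-\sin^2 B=\sin(A+B)\sin(A-B)$ with $A=3\pi/20$, $B=\pi/20$ to rewrite the left-hand side as $\sin(\pi/5)\sin(\pi/10)$. By Lemma~\ref{lem.iqkulim}, $\sin(\pi/10)=-\beta/2$ and $\sin(\pi/5)=\cos(3\pi/10)=\tfrac12\sqrt{-\beta\sqrt5}$, so the product equals $-\tfrac14\beta\sqrt{-\beta\sqrt5}$; since $-\beta>0$ one has $\bigl(-\beta\sqrt{-\beta\sqrt5}\bigr)^2=\beta^2(-\beta\sqrt5)=-\beta^3\sqrt5$, and because the left-hand side of \eqref{eq.leua6ck} is positive (as $0<\pi/20<3\pi/20<\pi/2$) this gives $-\beta\sqrt{-\beta\sqrt5}=\sqrt{-\beta^3\sqrt5}$, which is \eqref{eq.leua6ck}. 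Alternatively, one may subtract \eqref{eq.tqoyixe} from \eqref{eq.z60zcvo} and verify $\sqrt{\alpha\sqrt5}-\sqrt{-\beta\sqrt5}=\sqrt{-\beta^3\sqrt5}$ by squaring, using $(\alpha-\beta)\sqrt5=5$, $\alpha\beta=-1$ and $-\beta^3\sqrt5=5-2\sqrt5$. Finally, for \eqref{eq.dr721bq} I would apply the triple-angle identity $\sin 3x=\sin x\,(3-4\sin^2 x)$ at $x=\pi/20$: combined with \eqref{eq.tqoyixe} it gives $\sin(3\pi/20)=\sin(\pi/20)\bigl(3-4\sin^2(\pi/20)\bigr)=\bigl(1+\sqrt{\alpha\sqrt5}\bigr)\sin(\pi/20)$, and squaring this identity yields \eqref{eq.dr721bq}. (Equivalently, one can divide \eqref{eq.z60zcvo} by \eqref{eq.tqoyixe} and rationalize using $4-\alpha\sqrt5=2-\alpha=\beta^2$.)

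The only real obstacle is the surd bookkeeping. Each time two of the quantities $\sqrt{\alpha\sqrt5}$, $\sqrt{-\beta\sqrt5}$, $\sqrt{-\beta^3\sqrt5}$ must be identified, one has to square, simplify the resulting algebraic expression via $\alpha\beta=-1$, $\alpha-\beta=\sqrt5$ and $\beta^3=2-\sqrt5$, and then confirm a sign before taking the square root back. These sign checks are all immediate here, because every angle in play lies in $(0,\pi/2)$, but they should be stated explicitly so that the passages between $\sin^2$ and $\sin$ (and between nested surds) are legitimate.
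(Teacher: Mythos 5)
Your treatment of \eqref{eq.z60zcvo}, \eqref{eq.tqoyixe} and \eqref{eq.leua6ck} is correct and is essentially the paper's own proof: the half-angle formula $\sin^2(x/2)=\tfrac12(1-\cos x)$ for the first two, and the factorization $\sin^2A-\sin^2B=\sin(A+B)\sin(A-B)$ (the paper writes it as $\sin^2(3x)-\sin^2x=\sin(2x)\sin(4x)$ with $x=\pi/20$, which is the same computation) for the third, followed by the same surd bookkeeping $\bigl(-\beta\sqrt{-\beta\sqrt5}\bigr)^2=-\beta^3\sqrt5$ with the sign check you make explicit. No issues there.

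The problem is your last step for \eqref{eq.dr721bq}, which the paper's proof silently omits. Your triple-angle computation correctly gives $\sin(3\pi/20)=\bigl(1+\sqrt{\alpha\sqrt5}\,\bigr)\sin(\pi/20)$, but squaring that produces
\[
\sin^2\!\left(\frac{3\pi}{20}\right)=\bigl(1+\sqrt{\alpha\sqrt5}\,\bigr)^{2}\sin^2\!\left(\frac{\pi}{20}\right),
\]
with the factor \emph{squared}, which is not what \eqref{eq.dr721bq} asserts. The discrepancy is real, not cosmetic: numerically $\sin^2(3\pi/20)\approx 0.2061$ while $\bigl(1+\sqrt{\alpha\sqrt5}\,\bigr)\sin^2(\pi/20)\approx 2.902\times 0.0245\approx 0.0710$, so \eqref{eq.dr721bq} as printed is false and the correct identity is the squared version you actually derived (equivalently, it follows from dividing \eqref{eq.z60zcvo} by \eqref{eq.tqoyixe} as in your parenthetical remark, which likewise yields $(1+\sqrt{\alpha\sqrt5}\,)^2$). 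So the flaw is in the sentence ``squaring this identity yields \eqref{eq.dr721bq}'': it does not. Your method is the right one and in fact exposes a typo in the lemma; you should state the corrected identity rather than claim agreement with the printed one. Since \eqref{eq.dr721bq} is not used in the subsequent theorems (they rely on \eqref{eq.z60zcvo}--\eqref{eq.leua6ck}), nothing downstream is affected.
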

\begin{proof}
Identities \eqref{eq.z60zcvo} and~\eqref{eq.tqoyixe} are straightforward consequences of
\[
\sin^2\left(\frac x2\right)=\frac12(1 - \cos x).
\]
Identity \eqref{eq.leua6ck} comes from
\[
\sin ^2 (3x) - \sin ^2 x = \sin (2x)\sin (4x).
\]
\end{proof}

\begin{theorem}
For integer $s$,
\begin{equation}
\sum_{n = 0}^\infty  {\frac{{C_n F_{2n + s} }}{{4^{2n + 2} }}}  = \frac{{F_{s - 2} }}{2}\left( {1 - \frac{1}{2}\sqrt {\alpha \sqrt 5 } } \right) + \frac{{\alpha ^{s - 2} }}{4}\sqrt {\frac{{ - \beta ^3 }}{{\sqrt 5 }}},
\end{equation}

\begin{equation}
\sum_{n = 0}^\infty  {\frac{{C_n L_{2n + s} }}{{4^{2n + 2} }}}  = \frac{{L_{s - 2} }}{2}\left( {1 - \frac{1}{2}\sqrt {\alpha \sqrt 5 } } \right) + \frac{{\alpha ^{s - 2} }}{4}\sqrt { - \beta ^3 \sqrt 5 }.
\end{equation}
\end{theorem}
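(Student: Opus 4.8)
The plan is to split $F_{2n+s}$ and $L_{2n+s}$ by the Binet formulas, recognize the two resulting Catalan power series as evaluations of $G$ (equivalently $G_t$) at the special points produced by Lemma~\ref{lem.iqkulim}, and then use Lemma~\ref{lem.fdf14ud} to reshape the answer into the stated closed forms. I would handle the Fibonacci and Lucas cases in parallel, since they differ only by a sign.

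Since $4^{2n+2}=16\cdot 16^n$, substituting $F_{2n+s}=(\alpha^{2n+s}-\beta^{2n+s})/(\alpha-\beta)$ gives
\[
\sum_{n=0}^\infty\frac{C_nF_{2n+s}}{4^{2n+2}}=\frac{1}{16(\alpha-\beta)}\Bigl(\alpha^s\sum_{n=0}^\infty C_n\Bigl(\tfrac{\alpha^2}{16}\Bigr)^n-\beta^s\sum_{n=0}^\infty C_n\Bigl(\tfrac{\beta^2}{16}\Bigr)^n\Bigr),
\]
and the Lucas analogue replaces $\alpha-\beta$ by $1$ and the inner minus by a plus. Both arguments $\alpha^2/16$ and $\beta^2/16$ lie in $|z|<1/4$ (numerically about $0.164$ and $0.024$), so each inner sum equals $G(\alpha^2/16)$, resp.\ $G(\beta^2/16)$. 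Next I would evaluate these from $G(z)=\frac{1-\sqrt{1-4z}}{2z}$, using the elementary identities $1-\alpha^2/4=-\beta\sqrt5/4$ and $1-\beta^2/4=\alpha\sqrt5/4$ (immediate from $\alpha^2=\frac{3+\sqrt5}{2}$, $\beta^2=\frac{3-\sqrt5}{2}$) together with $\alpha^2\beta^2=1$, obtaining $G(\alpha^2/16)=4\beta^2\bigl(2-\sqrt{-\beta\sqrt5}\bigr)$ and $G(\beta^2/16)=4\alpha^2\bigl(2-\sqrt{\alpha\sqrt5}\bigr)$. Rewriting \eqref{eq.z60zcvo} and \eqref{eq.tqoyixe} as $2-\sqrt{-\beta\sqrt5}=4\sin^2(3\pi/20)$ and $2-\sqrt{\alpha\sqrt5}=4\sin^2(\pi/20)$ then yields the compact values
\[
G\Bigl(\tfrac{\alpha^2}{16}\Bigr)=16\beta^2\sin^2\!\Bigl(\tfrac{3\pi}{20}\Bigr),\qquad G\Bigl(\tfrac{\beta^2}{16}\Bigr)=16\alpha^2\sin^2\!\Bigl(\tfrac{\pi}{20}\Bigr).
\]
(These are also $G_t(3\pi/10)$ and $G_t(\pi/10)$, since $\sin(3\pi/10)=\alpha/2$ and $\sin(\pi/10)=-\beta/2$ by Lemma~\ref{lem.iqkulim}, which is the cleanest way to see why those particular angles occur.)

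Substituting these values back and using $\alpha\beta=-1$, so that $\alpha^s\beta^2=\alpha^{s-2}$ and $\beta^s\alpha^2=\beta^{s-2}$, collapses the Fibonacci sum to $\frac{1}{\sqrt5}\bigl(\alpha^{s-2}\sin^2(3\pi/20)-\beta^{s-2}\sin^2(\pi/20)\bigr)$ and the Lucas sum to $\alpha^{s-2}\sin^2(3\pi/20)+\beta^{s-2}\sin^2(\pi/20)$. To reach the displayed right-hand sides I would eliminate $\beta^{s-2}$ via $\beta^{s-2}=\alpha^{s-2}-\sqrt5\,F_{s-2}$ (resp.\ $\beta^{s-2}=L_{s-2}-\alpha^{s-2}$), which turns the Fibonacci expression into $F_{s-2}\sin^2(\pi/20)+\frac{\alpha^{s-2}}{\sqrt5}\bigl(\sin^2(3\pi/20)-\sin^2(\pi/20)\bigr)$ and the Lucas expression into $L_{s-2}\sin^2(\pi/20)+\alpha^{s-2}\bigl(\sin^2(3\pi/20)-\sin^2(\pi/20)\bigr)$. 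Finally I would apply \eqref{eq.tqoyixe} to the factor $\sin^2(\pi/20)$ and \eqref{eq.leua6ck} to the difference $\sin^2(3\pi/20)-\sin^2(\pi/20)=\frac14\sqrt{-\beta^3\sqrt5}$; a short cleanup of the nested radicals (using $\sqrt{-\beta^3\sqrt5}/\sqrt5=\sqrt{-\beta^3/\sqrt5}$ for the Fibonacci case) produces exactly the two asserted formulas.

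I do not expect a genuine obstacle. The only spot needing care is the last paragraph's bookkeeping with the nested surds — in particular, landing on $\sqrt{-\beta^3/\sqrt5}$ in the Fibonacci case but $\sqrt{-\beta^3\sqrt5}$ in the Lucas case — but this is mechanical once $\alpha\beta=-1$, $\alpha^2=\alpha+1$, and Lemmas~\ref{lem.iqkulim} and~\ref{lem.fdf14ud} are in hand.
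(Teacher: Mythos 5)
Your proposal is correct and follows essentially the same route as the paper: the paper's proof is precisely "determine $\alpha^s G_t(3\pi/10)\pm\beta^s G_t(\pi/10)$ using the Binet formulas and Lemma~\ref{lem.fdf14ud}," and your evaluation of $G(\alpha^2/16)$ and $G(\beta^2/16)$ from the algebraic form of $G$ is just the same computation presented through the closed form rather than through $1/\cos^2(z/2)$, as you yourself note. All the intermediate identities you use (e.g., $1-\alpha^2/4=-\beta\sqrt5/4$, $\alpha^s\beta^2=\alpha^{s-2}$, and the reduction via \eqref{eq.tqoyixe} and \eqref{eq.leua6ck}) check out.
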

\begin{proof}
Determine $\alpha^sG_t(3\pi/10)\pm\beta^sG_t(\pi/10)$, where $s$ is an arbitrary integer, using the Binet formulas and Lemma \ref{lem.fdf14ud}.
\end{proof}

\begin{example}
We have
\begin{equation}
\sum_{n = 0}^\infty  {\frac{{C_n F_{2n} }}{{4^{2n + 2} }}}  = \frac{1}{4}\sqrt {\frac{{ - \beta ^3 }}{{\sqrt 5 }}},
\end{equation}

\begin{equation}
\sum_{n = 0}^\infty  {\frac{{C_n L_{2n} }}{{4^{2n + 2} }}}  = 1 - \frac{1}{2}\sqrt {\alpha \sqrt 5 }  + \frac{1}{4}\sqrt { - \beta ^3 \sqrt 5 },
\end{equation}

\begin{equation}
\sum_{n = 0}^\infty  {\frac{{C_n F_{2n + 3} }}{{4^{2n + 2} }}}  = \frac{1}{2}\left( {1 - \frac{1}{2}\sqrt {\alpha \sqrt 5 } } \right) + \frac{1}{4}\sqrt {\frac{{ - \beta }}{{\sqrt 5 }}},
\end{equation}

\begin{equation}
\sum_{n = 0}^\infty  {\frac{{C_n L_{2n + 3} }}{{4^{2n + 2} }}}  = \frac{1}{2}\left( {1 - \frac{1}{2}\sqrt {\alpha \sqrt 5 } } \right) + \frac{1}{4}\sqrt { - \beta \sqrt 5 }.
\end{equation}
\end{example}
At $z=1$ and $z=1/3$, $G_3(z)$ gives
\begin{equation}
\sum_{n = 1}^\infty  {\frac{{( - 1)^{n - 1} C_{2n - 1} }}{{4^{2n} }}}  = \frac{1}{2}\sqrt {\frac{{1 + \sqrt 2 }}{2}}  - \frac{1}{2},
\end{equation}
and
\begin{equation}
\sum_{n = 1}^\infty  {\frac{{( - 1)^{n - 1} C_{2n - 1} }}{{3^n 4^{2n} }}}  = \frac{{(1 + \sqrt 3 )\sqrt {3\sqrt 3 } }}{{12}} - \frac{1}{2}.
\end{equation}
Similarly, $G_4(1)$ and $G_4(1/3)$ give
\begin{equation}
\sum_{n = 0}^\infty  {\frac{{( - 1)^n C_{2n} }}{{4^{2n} }}}  = \sqrt {2\sqrt 2  - 2},
\end{equation}
and
\begin{equation}
\sum_{n = 0}^\infty  {\frac{{( - 1)^n C_{2n} }}{{3^n 4^{2n} }}}  = 3^{1/4} (\sqrt 3  - 1).
\end{equation}
\begin{lemma}\label{lem.eyjaa37}
We have
\[
\sqrt \alpha   = \alpha \sqrt { - \beta },\quad\sqrt {\alpha \sqrt 5 }  = \alpha \sqrt { - \beta \sqrt 5 }.
\]
\end{lemma}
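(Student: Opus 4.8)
The plan is to reduce both identities to the single elementary fact that $\alpha\beta=-1$, which follows immediately from the Binet data $\alpha=\frac{1+\sqrt5}{2}$, $\beta=\frac{1-\sqrt5}{2}$ (or from $\beta=-1/\alpha$ as stated in the introduction). Before manipulating square roots I would first record the sign information that makes everything well defined over the reals: since $\alpha>0$ and $\beta<0$, both $\alpha$ and $-\beta$ are positive, so $\sqrt\alpha$, $\sqrt{-\beta}$, $\sqrt{\alpha\sqrt5}$ and $\sqrt{-\beta\sqrt5}$ are all real and positive; in particular $\sqrt{xy}=\sqrt x\,\sqrt y$ is legitimate for the nonnegative quantities involved.

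For the first identity I would start from $-\beta=1/\alpha$ and compute
\[
\alpha\sqrt{-\beta}=\alpha\sqrt{\tfrac1\alpha}=\frac{\alpha}{\sqrt\alpha}=\sqrt\alpha,
\]
which is exactly the claim. (Equivalently, square both sides: $(\alpha\sqrt{-\beta})^2=-\alpha^2\beta=\alpha\cdot(-\alpha\beta)=\alpha$, and since both sides are nonnegative, equality of squares gives equality.)

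For the second identity I would simply pull the common factor $\sqrt{\sqrt5}$ out and invoke the first identity:
\[
\alpha\sqrt{-\beta\sqrt5}=\alpha\sqrt{-\beta}\,\sqrt{\sqrt5}=\sqrt\alpha\,\sqrt{\sqrt5}=\sqrt{\alpha\sqrt5}.
\]
Again one may alternatively verify it by squaring: $(\alpha\sqrt{-\beta\sqrt5})^2=-\alpha^2\beta\sqrt5=\alpha\sqrt5$, with both sides nonnegative.

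Honestly, there is no real obstacle here: the only thing one must not skip is the remark that $\alpha>0$ and $-\beta>0$, so that the square roots are real and the square-root-of-a-product rule applies and so that equality of the squares implies equality of the (nonnegative) quantities themselves. Everything else is the one-line consequence of $\alpha\beta=-1$.
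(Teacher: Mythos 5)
Your proof is correct: both identities are immediate from $-\beta=1/\alpha$ (equivalently $\alpha\beta=-1$), together with the observation that $\alpha>0$ and $-\beta>0$ so all square roots are real and nonnegative. The paper states this lemma without proof, evidently regarding it as exactly this one-line consequence of the Binet data, so your argument supplies precisely the intended justification.
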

\begin{lemma}\label{lem.emjkwrk}
For integer $r$,
\begin{gather}
\alpha ^r  + \beta ^{r - 1}  = \alpha F_{r - 2}  + F_{r + 1},\\
\alpha ^r  - \beta ^{r - 1}  = \alpha F_{r + 1}  - F_{r - 2}.
\end{gather}
\end{lemma}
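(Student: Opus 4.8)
The plan is to reduce both identities to the single classical fact
$$\alpha^{n}=F_{n}\alpha+F_{n-1},\qquad \beta^{n}=F_{n}\beta+F_{n-1},$$
valid for \emph{every} integer $n$. The first is immediate from the Binet formulas (or by induction from $\alpha^{2}=\alpha+1$), and it persists for negative indices because both sides satisfy the two-term recurrence $x_{n+1}=x_{n}+x_{n-1}$ and agree at $n=0,1$; the second is the $\alpha\leftrightarrow\beta$ conjugate. This is the only ingredient needed, everything else being bookkeeping with $\alpha+\beta=1$ and the Fibonacci recurrence.

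First I would take $n=r$ in the first identity and $n=r-1$ in its conjugate, obtaining $\alpha^{r}=F_{r}\alpha+F_{r-1}$ and $\beta^{r-1}=F_{r-1}\beta+F_{r-2}$. Adding and subtracting gives
$$\alpha^{r}+\beta^{r-1}=F_{r}\alpha+F_{r-1}(1+\beta)+F_{r-2},\qquad \alpha^{r}-\beta^{r-1}=F_{r}\alpha-F_{r-1}\beta+F_{r-1}-F_{r-2}.$$
Then I would eliminate $\beta$ using $\beta=1-\alpha$, so that $1+\beta=2-\alpha$ and $-\beta=\alpha-1$. The sum becomes $\alpha(F_{r}-F_{r-1})+2F_{r-1}+F_{r-2}$, which collapses to $\alpha F_{r-2}+F_{r+1}$ upon using $F_{r}-F_{r-1}=F_{r-2}$ and $2F_{r-1}+F_{r-2}=F_{r-1}+F_{r}=F_{r+1}$. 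Likewise the difference becomes $\alpha(F_{r}+F_{r-1})-F_{r-2}=\alpha F_{r+1}-F_{r-2}$. These are exactly the two claimed formulas.

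There is essentially no hard step here; the only point requiring a word of care is the validity of the auxiliary identity $\alpha^{n}=F_{n}\alpha+F_{n-1}$ for negative indices, which is why I would phrase it for all integers $n$ from the start (the lemma itself is asserted for arbitrary integer $r$). As an alternative I would mention that one can instead expand the right-hand sides directly by Binet and reduce the claim to the two elementary identities $\alpha^{r+1}+\alpha^{r-1}=\sqrt5\,\alpha^{r}$ and $\alpha\beta^{r-2}+\beta^{r+1}=-\sqrt5\,\beta^{r-1}$, which follow from $\alpha^{2}+1=\alpha\sqrt5$ and $\beta^{2}+1=-\beta\sqrt5$; or, even more cheaply, observe that both sides of each identity satisfy $x_{r+1}=x_{r}+x_{r-1}$ and check two initial values. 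The route through $\alpha^{n}=F_{n}\alpha+F_{n-1}$ is the shortest, so that is the one I would write out in full.
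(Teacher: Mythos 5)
Your proof is correct. The paper actually states this lemma without any proof at all, so there is nothing to compare against; your route through the standard identity $\alpha^{n}=F_{n}\alpha+F_{n-1}$ (and its $\beta$-conjugate), valid for all integers $n$ since both sides satisfy the invertible two-term recurrence and agree at $n=0,1$, is the natural argument, and the bookkeeping with $\beta=1-\alpha$ and the Fibonacci recurrence checks out in both identities.
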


\begin{theorem}
For integer $s$,
\begin{equation}\label{eq.z2edmdb}
\sum_{n = 0}^\infty  {\frac{{C_{2n} F_{2n + s} }}{{4^{3n} }}}  = (\alpha F_{s + 1}  - F_{s - 2} )\sqrt {\frac{2}{5}}\,\sqrt { - \beta \sqrt 5 }  - L_{s - 2}\sqrt {\frac{2}{5}},
\end{equation}

\begin{equation}\label{eq.px5m9w2}
\sum_{n = 0}^\infty  {\frac{{C_{2n} L_{2n + s} }}{{4^{3n} }}}  = (\alpha F_{s - 2}  + F_{s + 1} )\sqrt { - 2 \beta \sqrt 5 }  - F_{s - 2} \sqrt {10},
\end{equation}

\begin{equation}\label{eq.soir62i}
\sum_{n = 1}^\infty  {\frac{{C_{2n - 1} F_{2n + s} }}{{4^{3n - 1} }}}  = 2F_s  - \frac{{L_{s - 1} }}{{\sqrt {10} }} - (\alpha F_{s + 2}  - F_{s - 1} )\sqrt {\frac{{ - \beta }}{{2\sqrt 5 }}},
\end{equation}

\begin{equation}\label{eq.ydexc96}
\sum_{n = 1}^\infty  {\frac{{C_{2n - 1} L_{2n + s} }}{{4^{3n - 1} }}}  = 2L_s  - \frac{1}{{\sqrt 2 }}F_{s - 1} \sqrt 5  - (\alpha F_{s - 1}  + F_{s + 2} )\sqrt {\frac{{ - \beta \sqrt 5 }}{2}}.
\end{equation}
\end{theorem}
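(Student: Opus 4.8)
The plan is to reduce everything, via the Binet formulas, to the closed forms of $G_1$ and $G_2$ evaluated at $z=\alpha/2$ and at $z=-\beta/2$; both points lie in $(0,1)$ (indeed $\alpha/2\approx0.809$ and $-\beta/2\approx0.309$), so the displayed expressions for $G_1(z)$ and $G_2(z)$ are valid there. Writing $F_{2n+s}=(\alpha^s\alpha^{2n}-\beta^s\beta^{2n})/\sqrt5$ and $L_{2n+s}=\alpha^s\alpha^{2n}+\beta^s\beta^{2n}$, and using the elementary identities
\[
\frac{C_{2n}\alpha^{2n}}{4^{3n}}=C_{2n}\frac{(\alpha/2)^{2n}}{4^{2n}},\qquad \frac{C_{2n-1}\alpha^{2n}}{4^{3n-1}}=\frac{\alpha}{2}\,C_{2n-1}\frac{(\alpha/2)^{2n-1}}{4^{2n-1}}
\]
together with their $\alpha\to\beta$ analogues — here one invokes that $G_2$ is even and $G_1$ is odd in $z$, so the $\beta$-sums are governed by $G_2(-\beta/2)$ and $G_1(-\beta/2)$ — one obtains
\[
\sum_{n\ge0}\frac{C_{2n}\alpha^{2n}}{4^{3n}}=G_2\!\left(\tfrac{\alpha}{2}\right),\qquad \sum_{n\ge0}\frac{C_{2n}\beta^{2n}}{4^{3n}}=G_2\!\left(-\tfrac{\beta}{2}\right),
\]
\[
\sum_{n\ge1}\frac{C_{2n-1}\alpha^{2n}}{4^{3n-1}}=\frac{\alpha}{2}\,G_1\!\left(\tfrac{\alpha}{2}\right),\qquad \sum_{n\ge1}\frac{C_{2n-1}\beta^{2n}}{4^{3n-1}}=-\frac{\beta}{2}\,G_1\!\left(-\tfrac{\beta}{2}\right).
\]
Each of the four target sums is then $\alpha^s$ times the relevant $\alpha$-sum minus (Fibonacci) or plus (Lucas) $\beta^s$ times the $\beta$-sum, divided additionally by $\sqrt5$ in the Fibonacci case; note that the $2/z$ term of $G_1$ is what will produce the constants $2F_s$ and $2L_s$ in the two $C_{2n-1}$ identities.

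Next I would evaluate the surds occurring in these four quantities. From $\alpha\sqrt5=\alpha+2$, $-\beta\sqrt5=\beta+2$, $\alpha^2=2-\beta$ and $\beta^2=2-\alpha$ one gets
\[
1+\tfrac{\alpha}{2}=\tfrac{\alpha\sqrt5}{2},\qquad 1-\tfrac{\alpha}{2}=\tfrac{\beta^2}{2},\qquad 1-\tfrac{\beta}{2}=\tfrac{\alpha^2}{2},\qquad 1+\tfrac{\beta}{2}=\tfrac{-\beta\sqrt5}{2},
\]
so that $\sqrt{1\pm\alpha/2}$ and $\sqrt{1\pm\beta/2}$ are rational multiples of $1$, $\alpha$, $-\beta$, $\sqrt{\alpha\sqrt5}$ and $\sqrt{-\beta\sqrt5}$. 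Lemma~\ref{lem.eyjaa37} then merges $\sqrt{\alpha\sqrt5}=\alpha\sqrt{-\beta\sqrt5}$, so after substitution each of $G_1(\alpha/2)$, $G_2(\alpha/2)$, $G_1(-\beta/2)$ and $G_2(-\beta/2)$ becomes a combination of $1$ and $\sqrt{-\beta\sqrt5}$ with golden-ratio coefficients.

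Finally I would assemble the four identities by forming the combinations $\alpha^s(\,\cdot\,)\mp\beta^s(\,\cdot\,)$ and separating the rational part from the $\sqrt{-\beta\sqrt5}$-part. The coefficients that turn up are of the types $\alpha^s\pm\beta^s$, $\alpha^{s+1}\mp\beta^s$, $\alpha^s\beta\pm\alpha\beta^s$ and $\alpha^s\beta^2+\alpha^2\beta^s$; using $\alpha\beta=-1$, the Binet relations $\alpha^r-\beta^r=\sqrt5F_r$, $\alpha^r+\beta^r=L_r$, and Lemma~\ref{lem.emjkwrk} for the shifted pairs $\alpha^r\pm\beta^{r-1}$, each of these rewrites as the $F$- and $L$-combinations appearing on the right (for instance $\alpha^s\beta^2+\alpha^2\beta^s=L_{s-2}$ and $\alpha^{s+1}-\beta^s=\alpha F_{s+2}-F_{s-1}$). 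It then remains only to recognize the surd normalizations $\sqrt2\,\sqrt{-\beta\sqrt5}=\sqrt{-2\beta\sqrt5}$, $\tfrac{\sqrt2}{2}\sqrt{-\beta\sqrt5}=\sqrt{\tfrac{-\beta\sqrt5}{2}}$ and $\tfrac{\sqrt2}{2\sqrt5}\sqrt{-\beta\sqrt5}=\sqrt{\tfrac{-\beta}{2\sqrt5}}$, which converts the raw output into the displayed right-hand sides. I expect the only genuine difficulty to be the bookkeeping in this last step — keeping the $\alpha$- and $\beta$-powers straight across the two parities of the summation index — rather than anything conceptual.
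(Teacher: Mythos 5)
Your proposal is correct and follows essentially the same route as the paper: evaluate $G_1$ and $G_2$ at $z=\alpha/2$ and $z=-\beta/2$, form the combinations $\alpha^s(\cdot)\mp\beta^s(\cdot)$, and convert via the surd identities $1+\alpha/2=\alpha\sqrt5/2$, $1-\alpha/2=\beta^2/2$, Lemma~\ref{lem.eyjaa37} and Lemma~\ref{lem.emjkwrk}. Your explicit treatment of the parity of $G_1$ (so that the $\beta$-sum is $-\tfrac{\beta}{2}G_1(-\beta/2)$) and your multipliers $\alpha^s,\beta^s$ are the careful versions of what the paper states tersely (the paper's ``$\alpha^{s-1}G_2(\alpha/2)$'' appears to be a slip for $\alpha^{s}G_2(\alpha/2)$, as its own displayed intermediate identity confirms).
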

\begin{proof}
With $s$ an arbitrary integer and noting Lemma \ref{lem.eyjaa37}, $\alpha ^{s - 1} G_2 (\alpha /2) \mp \beta ^{s - 1} G_2( - \beta /2)$ means
\[
\sum_{n = 0}^\infty  {C_{2n} \frac{{(\alpha ^{2n + s}  \mp \beta ^{2n + s} )}}{{4^{3n} }}}  = \sqrt 2 (\alpha ^s  \mp \beta ^{s - 1} )\sqrt { - \beta \sqrt 5 }  - \sqrt 2 (\alpha ^{s - 2}  \pm \beta ^{s - 2} );
\]
and hence, using Lemma \ref{lem.emjkwrk} and the Binet formulas, identities \eqref{eq.z2edmdb} and \eqref{eq.px5m9w2}. The proof of \eqref{eq.soir62i} and \eqref{eq.ydexc96} is similar. Use $\alpha ^{s} G_1 (\alpha /2) \mp \beta ^{s} G_1( - \beta /2)$.
\end{proof}

\begin{example}
Three examples for $s=0,1,2$ are
\begin{equation}
\sum_{n = 0}^\infty \frac{C_{2n} F_{2n}}{4^{3n}} = \sqrt{\frac{2}{5}} \Big (\sqrt{\alpha^3 \sqrt 5 }  - 3 \Big ),\quad\sum_{n = 0}^\infty \frac{C_{2n} L_{2n}}{4^{3n}} = \sqrt{2} \Big (\sqrt{\frac{\sqrt{5}}{\alpha^3}}  + \sqrt{5} \Big ),
\end{equation}

\begin{equation}
\sum_{n = 0}^\infty \frac{C_{2n} F_{2n+2}}{4^{3n}} = 2 \sqrt{\frac{2}{5}} \Big (\sqrt{\alpha \sqrt 5 }  - 1 \Big ),\quad\sum_{n = 0}^\infty \frac{C_{2n} L_{2n+2}}{4^{3n}} = 2 \sqrt{2} \sqrt{\frac{\sqrt{5}}{\alpha}},
\end{equation}
\begin{equation}
\sum_{n = 1}^\infty \frac{C_{2n-1} F_{2n+1}}{4^{3n-1}} = 2 + \sqrt{\frac{2}{5}} - \sqrt{2} \sqrt{\frac{\alpha}{ \sqrt 5 }},\quad\sum_{n = 1}^\infty \frac{C_{2n-1} L_{2n+1}}{4^{3n-1}} = 2 - \sqrt{2} \sqrt{\frac{\sqrt{5}}{\alpha}}.
\end{equation}

\end{example}

\section{Results from $f(z)$}

It is convenient to write $f(z)$ as
\[
\begin{split}
f(z) = \sum_{n = 0}^\infty  {\frac{{z^n }}{{C_n }}}  &= \frac{{2(z + 8)}}{{z^2 }}\cot ^4 (\arccos (\sqrt z /2))\\
&\qquad + \frac{{24}}{{z^2 }}\cot ^5 (\arccos (\sqrt z /2))\arcsin (\sqrt z /2).
\end{split}
\]
\begin{theorem}
For integer $s$,
\begin{gather}
\sum_{n = 0}^\infty  {\frac{{F_{2n + s} }}{{C_n }}}  = \frac{2}{5}(F_{s + 4}  + 8F_{s + 2} ) + (F_{s + 3} \sqrt 5  - F_{s + 1} )\frac{{12\pi }}{{25}}\sqrt {\frac{{\alpha ^3 }}{{\sqrt 5 }}},\\
\nonumber\\
\sum_{n = 0}^\infty  {\frac{{L_{2n + s} }}{{C_n }}}  = \frac{2}{5}(L_{s + 4}  + 8L_{s + 2} ) + (L_{s + 3} \sqrt 5  - L_{s + 1} )\frac{{12\pi }}{{25}}\sqrt {\frac{{\alpha ^3 }}{{\sqrt 5 }}}.
\end{gather}
\end{theorem}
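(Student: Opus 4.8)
The plan is to evaluate $f$ at $z=\alpha^2$ and $z=\beta^2$ (both lie strictly between $0$ and $4$, so the series converge) and then recombine with the Binet formulas, in the spirit of the preceding theorems. Since
\[
F_{2n+s}=\frac{\alpha^s(\alpha^2)^n-\beta^s(\beta^2)^n}{\sqrt5},\qquad L_{2n+s}=\alpha^s(\alpha^2)^n+\beta^s(\beta^2)^n,
\]
one has
\[
\sum_{n=0}^\infty\frac{F_{2n+s}}{C_n}=\frac{\alpha^s f(\alpha^2)-\beta^s f(\beta^2)}{\sqrt5},\qquad
\sum_{n=0}^\infty\frac{L_{2n+s}}{C_n}=\alpha^s f(\alpha^2)+\beta^s f(\beta^2),
\]
so everything reduces to closed forms for $f(\alpha^2)$ and $f(\beta^2)$.

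For that step I would use the $\cot$-form of $f$ displayed just above the theorem. The inputs are the elementary identities $4-\alpha^2=-\beta\sqrt5$ and $4-\beta^2=\alpha\sqrt5$, together with $\arccos(\alpha/2)=\pi/5$, $\arccos(-\beta/2)=2\pi/5$, $\arcsin(\alpha/2)=3\pi/10$ and $\arcsin(-\beta/2)=\pi/10$, all of which follow from Lemma~\ref{lem.iqkulim} (note that $\sqrt{\beta^2}=-\beta>0$), and finally $\cot(\pi/5)=\sqrt{\alpha^3/\sqrt5}$ and $\cot(2\pi/5)=-\beta^3\cot(\pi/5)$ from \eqref{eq.vn4pi2d}. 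Writing $c=\cot(\pi/5)$, so that $c^2=\alpha^3/\sqrt5$ and $c^4=\alpha^6/5$, and reducing the powers of $\cot(2\pi/5)$ via $(\alpha\beta)^6=1$ (so $\beta^{12}\alpha^6=\beta^6$ and $\beta^{15}\alpha^6=\beta^9$), a short computation gives
\[
f(\alpha^2)=\frac{2(\alpha^2+8)\alpha^2}{5}+\frac{36\pi\,\alpha^2 c}{25},\qquad
f(\beta^2)=\frac{2(\beta^2+8)\beta^2}{5}-\frac{12\pi\,\beta^5 c}{25}.
\]

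Assembling the two combinations, the rational parts are immediate: $\alpha^{s+2}(\alpha^2+8)\mp\beta^{s+2}(\beta^2+8)=(\alpha^{s+4}\mp\beta^{s+4})+8(\alpha^{s+2}\mp\beta^{s+2})$, which the Binet formulas turn into $\sqrt5\,(F_{s+4}+8F_{s+2})$ and $L_{s+4}+8L_{s+2}$, producing the stated $\tfrac25$-terms. The transcendental parts collapse to $\frac{12\pi c}{25\sqrt5}\bigl(3\alpha^{s+2}+\beta^{s+5}\bigr)$ for the Fibonacci sum and $\frac{12\pi c}{25}\bigl(3\alpha^{s+2}-\beta^{s+5}\bigr)$ for the Lucas sum; since $c=\sqrt{\alpha^3/\sqrt5}$, it then remains only to verify the two identities
\[
3\alpha^{s+2}+\beta^{s+5}=\sqrt5\,\bigl(F_{s+3}\sqrt5-F_{s+1}\bigr),\qquad
3\alpha^{s+2}-\beta^{s+5}=L_{s+3}\sqrt5-L_{s+1}.
\]

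These last two identities are where the real work sits, and I expect the sign-bookkeeping there (and the care needed with the half-integer power hidden in $c$, which can be tamed via Lemma~\ref{lem.eyjaa37}, $\sqrt\alpha=\alpha\sqrt{-\beta}$) to be the main obstacle. I would prove them by expanding the right-hand sides using $\sqrt5=\alpha-\beta$ and $\alpha\beta=-1$, writing each side as $P(\alpha)\alpha^s+Q(\beta)\beta^s$, and matching coefficients: the $\alpha^s$-part of each reduces to $\alpha^4+\alpha^2-\alpha=3\alpha^2$, i.e.\ $\alpha^3=2\alpha+1$, and the $\beta^s$-part to $\beta^5=\beta^4+\beta^2+\beta$, i.e.\ $\beta^4-\beta^3-\beta-1=0$, both of which are consequences of $x^2=x+1$. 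Alternatively, Lemma~\ref{lem.emjkwrk} can be used to repackage the mixed $\alpha$/$\beta$ expressions into $\alpha F_\bullet+F_\bullet$ form and shorten the final matching.
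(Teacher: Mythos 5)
Your proposal is correct and takes essentially the same route as the paper: evaluate the cotangent form of $f$ at $\alpha^2$ and $(-\beta)^2$, combine via the Binet formulas, reduce the cotangent powers with Lemma~\ref{lem.iqkulim}, and finish with the two identities $3\alpha^{s+2}\pm\beta^{s+5}=\dots$, which are exactly Lemma~\ref{lem.oakhmtx} with $r=s+2$ (the paper cites that lemma where you re-derive it by hand). Incidentally, your exponent $\beta^{s+5}$ in the transcendental part is the correct one --- the paper's intermediate display misprints it as $\beta^{s+2}$, though its final result agrees with yours.
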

\begin{proof}
Considering $\alpha^sf_t(\alpha^2)$ and $\beta^sf_t((-\beta)^2)$, where $s$ is an arbitrary integer, we have
\begin{gather}
\sum_{n = 0}^\infty  {\frac{{\alpha ^{2n + s} }}{{C_n }}}  = \frac{{2(\alpha ^{s + 2}  + 8\alpha ^s )}}{{\alpha ^4 }}\cot ^4 \left( {\frac{\pi }{5}} \right) + \frac{{36\pi }}{5}\alpha ^{s - 4} \cot ^5 \left( {\frac{\pi }{5}} \right),\\
\sum_{n = 0}^\infty  {\frac{{\beta ^{2n + s} }}{{C_n }}}  = \frac{{2(\beta ^{s + 2}  + 8\beta ^s )}}{{\beta ^4 }}\cot ^4 \left( {\frac{{2\pi }}{5}} \right) + \frac{{12\pi }}{5}\beta ^{s - 4} \cot ^5 \left( {\frac{{2\pi }}{5}} \right),
\end{gather}
from which, using the Binet formulas and relevant identities from Lemma \ref{lem.iqkulim}, we get
\[
\begin{split}
\sqrt 5 \sum_{n = 0}^\infty  {\frac{{F_{2n + s} }}{{C_n }}}  &= \frac{2}{5}(\alpha ^{s + 4}  - \beta ^{s + 4} ) + \frac{{16}}{5}(\alpha ^{s + 2}  - \beta ^{s + 2} )\\
&\qquad+ (3\alpha ^{s + 2}  + \beta ^{s + 2} )\frac{{12\pi }}{{25}}\sqrt {\frac{{\alpha ^3 }}{{\sqrt 5 }}},
\end{split}
\]
\[
\begin{split}
\sum_{n = 0}^\infty  {\frac{{L_{2n + s} }}{{C_n }}}  &= \frac{2}{5}(\alpha ^{s + 4}  + \beta ^{s + 4} ) + \frac{{16}}{5}(\alpha ^{s + 2}  + \beta ^{s + 2} )\\
&\qquad+ (3\alpha ^{s + 2}  - \beta ^{s + 2} )\frac{{12\pi }}{{25}}\sqrt {\frac{{\alpha ^3 }}{{\sqrt 5 }}}.
\end{split}
\]
The stated identities in the theorem now follow when we use the Binet formulas and invoke Lemma \ref{lem.oakhmtx} with $r=s+2$.
\end{proof}
\begin{example}
We have
\begin{equation}\label{eq.bbu2w6b}
\sum_{n = 0}^\infty  {\frac{{F_{2n} }}{{C_n }}}  = \frac{{22}}{5} + \frac{{12\pi (4\alpha  - 3)}}{{25}}\sqrt {\frac{{\alpha ^3 }}{{\sqrt 5 }}},\quad\sum_{n = 0}^\infty  {\frac{{L_{2n} }}{{C_n }}}  = \frac{{62}}{5} + \frac{{12\pi (8\alpha  - 5)}}{{25}}\sqrt {\frac{{\alpha ^3 }}{{\sqrt 5 }}},
\end{equation}
\begin{equation}
\sum_{n = 0}^\infty  {\frac{{F_{2n - 1} }}{{C_n }}}  = 4 + \frac{{12\pi }}{{25}}\sqrt {\alpha ^3 \sqrt 5 },\quad\sum_{n = 0}^\infty  {\frac{{L_{2n - 1} }}{{C_n }}}  = \frac{{24}}{5} + \frac{{12\pi (6\alpha  - 5)}}{{25}}\sqrt {\frac{{\alpha ^3 }}{{\sqrt 5 }}},
\end{equation}
\begin{equation}
\sum_{n = 0}^\infty  {\frac{{F_{2n - 2} }}{{C_n }}}  = \frac{2}{5} + \frac{{24\pi }}{{25}}\sqrt {\frac{\alpha }{{\sqrt 5 }}},\quad\sum_{n = 0}^\infty  {\frac{{L_{2n - 2} }}{{C_n }}}  = \frac{{38}}{5} + \frac{{24\pi }}{{25}}\sqrt {\frac{{\alpha ^5 }}{{\sqrt 5 }}}.
\end{equation}
\end{example}

The first identity in~\eqref{eq.bbu2w6b} was also obtained by Stewart~\cite[Identity~(38a)]{stewart21}.

Since $L_{2n}=5F_n^2+2(-1)^n$, $L_{2n}=L_n^2-2(-1)^n$ and
$$
\sum_{n=0}^\infty \frac{(-1)^n}{C_n}=\frac{14}{25}-\frac{24\sqrt5}{125}\ln (\alpha)
$$
we have also the following interesting series
\begin{equation}
5\sum_{n=1}^\infty \frac{F^2_{n}}{C_n}=\frac{282}{25}+\frac{6(15+19\sqrt5)\pi}{125} \,\omega + \frac{48\sqrt5}{125}\ln (\alpha),
\end{equation}
\begin{equation}
\sum_{n=0}^\infty \frac{L^2_{n}}{C_n}=\frac{338}{25}+\frac{6(15+19\sqrt5)\pi}{125} \,\omega - \frac{48\sqrt5}{125}\ln (\alpha),
\end{equation}
where $\omega=\sqrt{\sqrt5\alpha}=\sqrt{2+\alpha}$.

\section{Results from $W(z)$}

\[
W(z) = \sum_{n = 0}^\infty  {\frac{{C_n }}{{2^{2n + 1} }}\frac{{z^{2n + 2} }}{{2n + 1}}}  = z\arcsin z + \sqrt {1 - z^2 }  - 1.
\]
The trigonometric version of $W(z)$ is
\[
W_t (z) = \sum_{n = 0}^\infty  {\frac{{C_n }}{{2^{2n + 1} }}\frac{{\sin ^{2n + 2} z}}{{2n + 1}}}  = z\sin z - 2\sin ^2 \left( {\frac{z}{2}} \right).
\]
At $z=\pi/2$, $\pi/3$, $\pi/4$, $\pi/6$, $W_t$ gives
\begin{equation}
\sum_{n = 0}^\infty  {\frac{{C_n }}{{2^{2n + 1} (2n + 1)}}}  = \frac{\pi }{2} - 1,
\end{equation}

\begin{equation}
\sum_{n = 0}^\infty  {\frac{{C_n 3^n }}{{2^{4n} (2n + 1)}}}  = \frac{{4(\pi \sqrt 3  - 3)}}{9},
\end{equation}

\begin{equation}
\sum_{n = 0}^\infty  {\frac{{C_n }}{{2^{3n} (2n + 1)}}}  = \frac{{(\pi  + 4)\sqrt 2 }}{2} - 4,
\end{equation}

\begin{equation}
\sum_{n = 0}^\infty  {\frac{{C_n }}{{2^{4n} (2n + 1)}}}  = \frac{{2(\pi  + 6\sqrt 3  - 12)}}{3}.
\end{equation}
\begin{lemma}\label{lem.equ4w3l}
For integer $r$,
\begin{gather}
3\alpha^r+\beta^r=2L_r+F_r\sqrt5,\\
%\nonumber\\
3\alpha^r-\beta^r=L_r+2F_r\sqrt5.
\end{gather}

\end{lemma}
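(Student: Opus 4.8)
The plan is to read both identities off directly from the Binet formulas recalled in the introduction, namely $L_r=\alpha^r+\beta^r$ and $F_r=(\alpha^r-\beta^r)/(\alpha-\beta)$. Since $\alpha-\beta=\sqrt5$, the latter is equivalent to $F_r\sqrt5=\alpha^r-\beta^r$, so the two quantities $L_r$ and $F_r\sqrt5$ are precisely $\alpha^r+\beta^r$ and $\alpha^r-\beta^r$.

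First I would form the combination $2L_r+F_r\sqrt5=2(\alpha^r+\beta^r)+(\alpha^r-\beta^r)$, which collapses to $3\alpha^r+\beta^r$; this is the first claimed equality. Next, $L_r+2F_r\sqrt5=(\alpha^r+\beta^r)+2(\alpha^r-\beta^r)=3\alpha^r-\beta^r$ gives the second. Equivalently, one may invert the $2\times2$ linear system to obtain $\alpha^r=(L_r+F_r\sqrt5)/2$ and $\beta^r=(L_r-F_r\sqrt5)/2$ and substitute into $3\alpha^r\pm\beta^r$; either route is a two-line computation.

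Because the Binet formulas are valid for every integer $r$ (for negative indices one uses $\alpha\beta=-1$, i.e.\ $\beta^{-1}=-\alpha$), no separate case analysis is needed for $r<0$. There is essentially no obstacle here; the only point requiring a moment's care is keeping the sign of $\sqrt5=\alpha-\beta>0$ straight, which is immediate from $\alpha=(1+\sqrt5)/2$ and $\beta=(1-\sqrt5)/2$.
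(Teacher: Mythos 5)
Your proof is correct: writing $L_r=\alpha^r+\beta^r$ and $F_r\sqrt5=\alpha^r-\beta^r$ and forming the two linear combinations immediately gives both identities, valid for all integers $r$. The paper states this lemma without proof, evidently regarding it as exactly this two-line Binet computation, so your argument matches the intended (implicit) one.
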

\begin{theorem}
For integer $s$,
\begin{equation}
\begin{split}
\sum_{n = 0}^\infty  {\frac{{C_n }}{{4^{2n + 1} }}\frac{{F_{2n + s} }}{{2n + 1}}}  &= \frac{\pi }{{10\sqrt 5 }}(2L_{s - 1}  + F_{s - 1} \sqrt 5 )\\
&\qquad + F_{s - 2} (\sqrt {\alpha \sqrt 5 }  - 2)- \alpha ^{s - 2} \sqrt {\frac{{ - \beta ^3 }}{{\sqrt 5 }}},
\end{split}
\end{equation}
\begin{equation}
\begin{split}
\sum_{n = 0}^\infty  {\frac{{C_n }}{{4^{2n + 1} }}\frac{{L_{2n + s} }}{{2n + 1}}}  &= \frac{\pi }{{10}}(L_{s - 1}  + 2F_{s - 1} \sqrt 5 )\\
&\qquad + L_{s - 2} (\sqrt {\alpha \sqrt 5 }  - 2)- \alpha ^{s - 2} \sqrt { - \beta ^3 \sqrt 5 }.
\end{split}
\end{equation}
\end{theorem}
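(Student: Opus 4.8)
The plan is to follow the same template used for the three earlier theorems in this section: evaluate the trigonometric generating function $W_t$ at the two golden-ratio arguments $z=3\pi/10$ and $z=\pi/10$, and then combine the two evaluations via the Binet formulas.

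First I would record the two needed values. By Lemma \ref{lem.iqkulim}, $\sin(3\pi/10)=\alpha/2$ and $\sin(\pi/10)=-\beta/2$, so $\sin^2(3\pi/10)=\alpha^2/4$ and $\sin^2(\pi/10)=\beta^2/4$; in particular $\alpha^2/4<1$, so the relevant series converge and the identity $W_t(z)=z\sin z-2\sin^2(z/2)$ applies at both points. Since
\[
\frac{C_n}{2^{2n+1}}\sin^{2n+2}(3\pi/10)=\frac{C_n\,\alpha^{2n+2}}{2\cdot 4^{2n+1}},\qquad \frac{C_n}{2^{2n+1}}\sin^{2n+2}(\pi/10)=\frac{C_n\,\beta^{2n+2}}{2\cdot 4^{2n+1}},
\]
for an arbitrary integer $s$ we get
\[
\alpha^{s-2}\,(2W_t(3\pi/10))=\sum_{n=0}^\infty\frac{C_n\,\alpha^{2n+s}}{4^{2n+1}(2n+1)},\qquad \beta^{s-2}\,(2W_t(\pi/10))=\sum_{n=0}^\infty\frac{C_n\,\beta^{2n+s}}{4^{2n+1}(2n+1)}.
\]
Forming $\tfrac{1}{\sqrt5}\bigl(\alpha^{s-2}(2W_t(3\pi/10))-\beta^{s-2}(2W_t(\pi/10))\bigr)$ and $\alpha^{s-2}(2W_t(3\pi/10))+\beta^{s-2}(2W_t(\pi/10))$ then produces the Fibonacci sum and the Lucas sum on the left, respectively.

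On the right, I would substitute $2W_t(3\pi/10)=\tfrac{3\pi\alpha}{10}-2+\sqrt{-\beta\sqrt5}$ and $2W_t(\pi/10)=-\tfrac{\pi\beta}{10}-2+\sqrt{\alpha\sqrt5}$, obtained from $W_t(z)=z\sin z-2\sin^2(z/2)$ and the squared-sine formulas of Lemma \ref{lem.fdf14ud}. The $\pi$-terms then collect into $\tfrac{\pi}{10}\bigl(3\alpha^{s-1}\mp\beta^{s-1}\bigr)$, which Lemma \ref{lem.equ4w3l} turns into $\tfrac{\pi}{10\sqrt5}(2L_{s-1}+F_{s-1}\sqrt5)$ and $\tfrac{\pi}{10}(L_{s-1}+2F_{s-1}\sqrt5)$; the ``$-2$'' terms give $-2F_{s-2}$ and $-2L_{s-2}$ after a Binet step.

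The delicate part is the surd bookkeeping in the remaining terms. Using Lemma \ref{lem.eyjaa37} to write $\sqrt{\alpha\sqrt5}=\alpha\sqrt{-\beta\sqrt5}$, all radical contributions become multiples of the single surd $\sqrt{-\beta\sqrt5}$; one then has to recognise $\alpha^{s-2}\sqrt{-\beta^3\sqrt5}=\alpha^{s-3}\sqrt{-\beta\sqrt5}$ (from $-\beta^3=(-\beta)^2(-\beta)$ and $\alpha^{s-2}(-\beta)=\alpha^{s-3}$) and apply $\alpha^{s-1}-\alpha^{s-3}=\alpha^{s-2}$ (equivalently $\alpha^2-1=\alpha$) to match the coefficient of $\sqrt{-\beta\sqrt5}$ with $F_{s-2}\sqrt{\alpha\sqrt5}-\alpha^{s-2}\sqrt{-\beta^3/\sqrt5}$ in the Fibonacci case and with $L_{s-2}\sqrt{\alpha\sqrt5}-\alpha^{s-2}\sqrt{-\beta^3\sqrt5}$ in the Lucas case. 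I expect this nested-radical manipulation — not any conceptual issue — to be the main obstacle, since the rest is exactly the Binet-combination routine already carried out three times above.
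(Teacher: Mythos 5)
Your proposal is correct and follows essentially the same route as the paper: evaluate $\alpha^{s-2}\cdot 2W_t(3\pi/10)\mp\beta^{s-2}\cdot 2W_t(\pi/10)$ and combine via the Binet formulas together with Lemmas~\ref{lem.fdf14ud} and~\ref{lem.equ4w3l}. The only immaterial difference is that the paper organises the surd terms through the difference identity~\eqref{eq.leua6ck}, whereas you substitute~\eqref{eq.z60zcvo} and~\eqref{eq.tqoyixe} separately and reconcile via Lemma~\ref{lem.eyjaa37}; your coefficient check $\alpha^{s-1}-\alpha^{s-3}=\alpha^{s-2}$ correctly closes that gap.
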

\begin{proof}
Determining $\alpha ^{s - 2} 2W_t (3\pi /10) \mp \beta ^{s - 2} 2W_t (\pi /10)$, where $s$ is an arbitrary integer, and employing identity \eqref{eq.leua6ck}, we have
\[
\begin{split}
\sum_{n = 0}^\infty  {\frac{{C_n }}{{4^{2n + 1} }}\frac{{\alpha ^{2n + s}  \mp \beta ^{2n + s} }}{{2n + 1}}}  &= \frac{\pi }{{10}}(3\alpha ^{s - 1}  \pm \beta ^{s - 1} )\\
&\qquad - 4(\alpha ^{s - 2}  \mp \beta ^{s - 2} )\sin ^2 \left( {\frac{\pi }{{20}}} \right) - \sqrt { - \beta ^3 \sqrt 5 } \alpha ^{s - 2};
\end{split}
\]
from which the stated identities follow in view of the Binet formulas and Lemmas~\ref{lem.fdf14ud} and~\ref{lem.equ4w3l}.
\end{proof}
\begin{example}
We have
\[
\sum_{n = 1}^\infty  {\frac{{C_{n - 1} }}{{4^{2n - 1} }}\frac{{F_{2n} }}{{2n - 1}}}  = \frac{{\pi \alpha ^3 }}{{10\sqrt 5 }} - \sqrt {\frac{{ - \beta ^3 }}{{\sqrt 5 }}}.
\]

\end{example}

\section{Results from $Y(z)$}
The identity
\[
Y(z) = \sum_{n = 1}^\infty  {\frac{{z^n }}{{n^2 (n + 1)C_n }}}  = 2(\arcsin (\sqrt z /2))^2
\]
immediately yields the following summation formulas:
\begin{equation}\label{zhao}
\sum_{n = 1}^\infty {\frac{1}{{n^2 (n + 1)C_n }}} = \frac{\pi^2}{18}, \qquad
\sum_{n = 1}^\infty {\frac{(-1)^n}{{n^2 (n + 1)C_n }}} = -2 \ln^2\Big (\frac{1}{\alpha} \Big ),
\end{equation}
\begin{equation}
\sum_{n = 1}^\infty \frac{{4^n }}{{n^2 (n + 1)C_n }} = \frac{{\pi ^2 }}{2}, \qquad
\sum_{n = 1}^\infty \frac{{3^n }}{{n^2 (n + 1)C_n }} = \frac{{2\pi ^2 }}{9}.
\end{equation}
Most likely all these summations are known. The first one is a classical result due to Euler.
Both sums in equation \eqref{zhao} appear in the article by Zhao and Wang \cite{zhao}.

\begin{theorem}
For integer $s$,
\begin{equation}
\sum_{n = 1}^\infty {\frac{{F_{2n + s} }}{{n^2 (n + 1)C_n }}} = \frac{{\pi ^2 }}{{50\sqrt 5 }}(9\alpha ^s  - \beta ^s ),
\end{equation}

\begin{equation}
\sum_{n = 1}^\infty  {\frac{{L_{2n + s} }}{{n^2 (n + 1)C_n }}} = \frac{{\pi ^2 }}{{50}}(9\alpha ^s  + \beta ^s ).
\end{equation}
\end{theorem}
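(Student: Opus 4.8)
The plan is to exploit the closed form $Y(z)=2\bigl(\arcsin(\sqrt z/2)\bigr)^2$ in exactly the same spirit as the earlier evaluations in this section, specializing $z$ to $\alpha^2$ and to $\beta^2$. First I would record that the series defining $Y(z)$ converges for all $|z|\le 4$ (since $1/(n^2(n+1)C_n)=O(4^{-n}n^{-3/2})$), so in particular the values $z=\alpha^2=\tfrac{3+\sqrt5}{2}<4$ and $z=\beta^2=(-\beta)^2=\tfrac{3-\sqrt5}{2}<4$ are admissible and the closed form applies there.

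Next I would identify the two arcsine values. Since $-\beta>0$ and, by Lemma~\ref{lem.iqkulim}, $\sin(\pi/10)=-\beta/2$ and $\sin(3\pi/10)=\alpha/2$, with both $\pi/10$ and $3\pi/10$ lying in the principal range $[-\pi/2,\pi/2]$ and both $-\beta/2,\ \alpha/2\in(0,1)$, we get $\arcsin(\sqrt{\beta^2}/2)=\arcsin(-\beta/2)=\pi/10$ and $\arcsin(\sqrt{\alpha^2}/2)=\arcsin(\alpha/2)=3\pi/10$. Hence
\[
Y(\alpha^2)=2\left(\frac{3\pi}{10}\right)^2=\frac{9\pi^2}{50},\qquad Y(\beta^2)=2\left(\frac{\pi}{10}\right)^2=\frac{\pi^2}{50}.
\]

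Finally I would form the combinations $\alpha^s Y(\alpha^2)\mp\beta^s Y(\beta^2)$ for arbitrary integer $s$. Term-by-term,
\[
\alpha^s Y(\alpha^2)\mp\beta^s Y(\beta^2)=\sum_{n=1}^\infty\frac{\alpha^{2n+s}\mp\beta^{2n+s}}{n^2(n+1)C_n},
\]
which equals $(\alpha-\beta)\sum_{n\ge1}F_{2n+s}/(n^2(n+1)C_n)$ in the minus case and $\sum_{n\ge1}L_{2n+s}/(n^2(n+1)C_n)$ in the plus case, by the Binet formulas. On the other side the combination is $\frac{\pi^2}{50}(9\alpha^s\mp\beta^s)$. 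Dividing the minus case by $\alpha-\beta=\sqrt5$ yields the claimed Fibonacci identity, and the plus case yields the Lucas identity directly. There is no real obstacle here; the only points requiring a word of care are the branch identification of $\arcsin$ (handled by Lemma~\ref{lem.iqkulim} together with the observation that $\pi/10,\,3\pi/10\in[-\pi/2,\pi/2]$) and the legitimacy of the termwise manipulation, which is immediate from absolute convergence.
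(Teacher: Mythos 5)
Your proposal is correct and follows exactly the paper's own (very terse) proof: evaluate $Y(\alpha^2)$ and $Y(\beta^2)$ via the closed form and Lemma~\ref{lem.iqkulim}, then combine $\alpha^s Y(\alpha^2)\pm\beta^s Y(\beta^2)$ with the Binet formulas. Your added remarks on convergence and the $\arcsin$ branch are sound and merely make explicit what the paper leaves implicit.
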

\begin{proof}
Determine $\alpha^sY(\alpha^2)\pm\beta^sY(\beta^2)$ and use the Binet formulas.
\end{proof}
\begin{example}
We have
\begin{equation}\label{eq.a5dtfdc}
\sum_{n = 1}^\infty  {\frac{{F_{2n} }}{{n^2 (n + 1)C_n }}}  = \frac{{4\pi ^2 }}{{25\sqrt 5 }},
\end{equation}

\begin{equation}
\sum_{n = 1}^\infty  {\frac{{L_{2n} }}{{n^2 (n + 1)C_n }}}  = \frac{{\pi ^2 }}{5}.
\end{equation}
\end{example}
Identity~\eqref{eq.a5dtfdc} was also obtained by Stewart~\cite[Identity~(37c)]{stewart21}.
\section{Results from $X(z)$}
\[
X(z) = \sum_{n = 1}^\infty  {\frac{{z^n }}{{n(n + 1)C_n }}}  = \frac{{2\sqrt z \arcsin (\sqrt z /2)}}{{\sqrt {4 - z} }},
\]
which can also be written
\[
X(z) = \sum_{n = 1}^\infty  {\frac{{z^n }}{{n(n + 1)C_n }}}  = 2\cot (\arccos (\sqrt z/2))\arcsin (\sqrt z/2).
\]
\begin{lemma}\label{lem.oakhmtx}
For any integer $r$,
\begin{gather}
3\alpha ^r  - \beta ^{r + 3}  = L_{r + 1} \sqrt 5  - L_{r - 1},\\
3\alpha ^r  + \beta ^{r + 3}  = \sqrt 5 (F_{r + 1} \sqrt 5  - F_{r - 1} ).\label{eq.rch7jzs}
\end{gather}
\end{lemma}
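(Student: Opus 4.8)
The plan is to verify both identities directly from the Binet formulas, reducing everything to the elementary relations $\alpha\beta=-1$, $\alpha-\beta=\sqrt5$, together with the minimal-polynomial relation $\alpha^2=\alpha+1$ (equivalently $\beta^2=\beta+1$). From the latter one extracts the only auxiliary facts needed: $\alpha^3=2\alpha+1$, $\beta^3=2\beta+1$, and $\beta^4=(\beta+1)^2=3\beta+2$.

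To prove the first identity I would expand its right-hand side using $L_{r+1}=\alpha^{r+1}+\beta^{r+1}$, $L_{r-1}=\alpha^{r-1}+\beta^{r-1}$ and $\sqrt5=\alpha-\beta$, obtaining
\[
L_{r+1}\sqrt5-L_{r-1}=(\alpha-\beta)(\alpha^{r+1}+\beta^{r+1})-\alpha^{r-1}-\beta^{r-1}.
\]
Multiplying out and using $\alpha\beta^{r+1}=-\beta^{r}$ and $\beta\alpha^{r+1}=-\alpha^{r}$ (both from $\alpha\beta=-1$) collapses the four cross terms and leaves a sum of pure powers of $\alpha$ and pure powers of $\beta$. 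The $\alpha$-part is $\alpha^{r-1}(\alpha^3+\alpha-1)=\alpha^{r-1}\cdot 3\alpha=3\alpha^{r}$ because $\alpha^3=2\alpha+1$, and the $\beta$-part is $-\beta^{r-1}(\beta^3+\beta+1)=-\beta^{r-1}(3\beta+2)=-\beta^{r-1}\beta^4=-\beta^{r+3}$ because $\beta^4=3\beta+2$. This yields $3\alpha^{r}-\beta^{r+3}$, as claimed. The second identity is identical in spirit: writing $\sqrt5\,(F_{r+1}\sqrt5-F_{r-1})=\sqrt5(\alpha^{r+1}-\beta^{r+1})-(\alpha^{r-1}-\beta^{r-1})$ via $\sqrt5\,F_n=\alpha^n-\beta^n$ and running the same bookkeeping produces $3\alpha^{r}+\beta^{r+3}$.

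A cleaner alternative I would keep in reserve: every quantity occurring in either identity (namely $3\alpha^{r}$, $\beta^{r+3}$, and—up to the constant factor $\sqrt5$—the terms $L_{r\pm1}$ and $F_{r\pm1}$) satisfies the recurrence $x_{r+2}=x_{r+1}+x_r$ for all integers $r$. Hence it suffices to verify each identity at two consecutive values, say $r=0$ and $r=1$, after which forward and backward induction closes the argument. For the first identity, $r=0$ gives $3-\beta^{3}=2-2\beta=1+\sqrt5=L_{1}\sqrt5-L_{-1}$ and $r=1$ gives $3\alpha-\beta^{4}=3\sqrt5-2=L_{2}\sqrt5-L_{0}$, and the Fibonacci analogue is checked the same way.

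There is no genuine obstacle here; the computation is routine. The only spots demanding a little care are the sign bookkeeping when the cross terms are absorbed via $\alpha\beta=-1$, and the observation that the residual coefficients $\alpha^3+\alpha-1=3\alpha$ and $1+\beta+\beta^3=3\beta+2=\beta^4$ are exactly what make the powers reassemble into the stated right-hand sides.
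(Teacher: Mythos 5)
Your proof is correct, and it is essentially the intended one: the paper states this lemma without proof precisely because it reduces to the routine Binet-formula computation you carry out (using $\alpha\beta=-1$, $\alpha-\beta=\sqrt5$, $\alpha^3+\alpha-1=3\alpha$, and $1+\beta+\beta^3=\beta^4$). Both your direct expansion and your backup two-point recurrence check are valid.
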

\begin{theorem}
For integer $s$,
\begin{equation}
\sum_{n = 1}^\infty  {\frac{{F_{2n + s} }}{{n(n + 1)C_n }}}  = (F_{s + 1} \sqrt 5  - F_{s - 1} )\frac{\pi }{5}\sqrt {\frac{{\alpha ^3 }}{{\sqrt 5 }}} ,
\end{equation}

\begin{equation}
\sum_{n = 1}^\infty  {\frac{{L_{2n + s} }}{{n(n + 1)C_n }}}  = (L_{s + 1} \sqrt 5  - L_{s - 1} )\frac{\pi }{5}\sqrt {\frac{{\alpha ^3 }}{{\sqrt 5 }}}.
\end{equation}
\end{theorem}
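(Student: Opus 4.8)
The plan is to mirror the argument used for $Y(z)$ in the previous section: evaluate a closed form of $X(z)$ at $z=\alpha^2$ and $z=\beta^2$, form the combinations $\alpha^{s}X(\alpha^2)\mp\beta^{s}X(\beta^2)$, and read off the Fibonacci and Lucas sums via the Binet formulas. Since $\alpha^2=\tfrac{3+\sqrt5}{2}\approx 2.618$ and $\beta^2=\tfrac{3-\sqrt5}{2}\approx 0.382$ both lie in $(0,4)$, the defining series of $X(\alpha^2)$ and $X(\beta^2)$ converge absolutely and the trigonometric form
\[
X(z)=2\cot\!\bigl(\arccos(\sqrt z/2)\bigr)\arcsin(\sqrt z/2)
\]
is applicable at both points.

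The first substantive step is the trigonometric evaluation. For $z=\alpha^2$ one has $\sqrt z/2=\alpha/2=\sin(3\pi/10)=\cos(\pi/5)$ by Lemma~\ref{lem.iqkulim}, so that $\arccos(\alpha/2)=\pi/5$ and $\arcsin(\alpha/2)=3\pi/10$ (both arguments lying in the principal range), giving
\[
X(\alpha^2)=\frac{3\pi}{5}\cot\!\Bigl(\frac{\pi}{5}\Bigr)=\frac{3\pi}{5}\sqrt{\frac{\alpha^3}{\sqrt5}},
\]
where the last equality is \eqref{eq.vn4pi2d}. Similarly, for $z=\beta^2$ we have $\sqrt z/2=-\beta/2=\sin(\pi/10)=\cos(2\pi/5)$, hence $\arccos(-\beta/2)=2\pi/5$ and $\arcsin(-\beta/2)=\pi/10$, so that
\[
X(\beta^2)=\frac{\pi}{5}\cot\!\Bigl(\frac{2\pi}{5}\Bigr)=-\,\beta^{3}\,\frac{\pi}{5}\sqrt{\frac{\alpha^3}{\sqrt5}},
\]
again by \eqref{eq.vn4pi2d}. (Equivalently one may use the radical form $X(z)=2\sqrt z\,\arcsin(\sqrt z/2)/\sqrt{4-z}$ together with $4-\alpha^2=-\beta\sqrt5$, $4-\beta^2=\alpha\sqrt5$ and Lemma~\ref{lem.eyjaa37}, but the cotangent form is cleaner.)

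It then remains to assemble the combinations. Multiplying the two evaluations by $\alpha^{s}$ and $\beta^{s}$ respectively and subtracting/adding gives
\[
\alpha^{s}X(\alpha^2)\mp\beta^{s}X(\beta^2)=\frac{\pi}{5}\sqrt{\frac{\alpha^3}{\sqrt5}}\,\bigl(3\alpha^{s}\pm\beta^{s+3}\bigr).
\]
On the left, the Binet formulas turn $\alpha^{s}X(\alpha^2)-\beta^{s}X(\beta^2)$ into $\sqrt5\sum_{n\ge1}F_{2n+s}/(n(n+1)C_n)$ and $\alpha^{s}X(\alpha^2)+\beta^{s}X(\beta^2)$ into $\sum_{n\ge1}L_{2n+s}/(n(n+1)C_n)$. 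On the right, Lemma~\ref{lem.oakhmtx} with $r=s$ gives $3\alpha^{s}+\beta^{s+3}=\sqrt5(F_{s+1}\sqrt5-F_{s-1})$ and $3\alpha^{s}-\beta^{s+3}=L_{s+1}\sqrt5-L_{s-1}$; dividing the Fibonacci identity through by $\sqrt5$ yields exactly the two displayed formulas. The only real obstacle is the trigonometric bookkeeping of the first step — correctly recognizing $\alpha/2=\cos(\pi/5)$ and $-\beta/2=\cos(2\pi/5)$ via Lemma~\ref{lem.iqkulim}, checking that the arguments of $\arccos$ and $\arcsin$ fall in the principal branches, and tracking the nested radical $\sqrt{\alpha^3/\sqrt5}$ supplied by \eqref{eq.vn4pi2d}; after that the computation reduces to the Binet formulas and the algebraic identity of Lemma~\ref{lem.oakhmtx}.
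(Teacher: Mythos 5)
Your proposal is correct and follows essentially the same route as the paper: evaluate $X$ at $\alpha^2$ and $\beta^2$ (obtaining $\frac{3\pi}{5}\cot(\pi/5)$ and $\frac{\pi}{5}\cot(2\pi/5)$), form $\alpha^sX(\alpha^2)\mp\beta^sX(\beta^2)$, and finish with the Binet formulas, identity \eqref{eq.vn4pi2d}, and Lemma~\ref{lem.oakhmtx} with $r=s$. The only difference is that you spell out the trigonometric bookkeeping ($\alpha/2=\cos(\pi/5)$, $-\beta/2=\cos(2\pi/5)$) that the paper leaves implicit; all signs and the final division by $\sqrt5$ check out.
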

\begin{proof}
$\alpha^sX(\alpha^2)\pm\beta^sX(\beta^2)$ gives
\[
\sum_{n = 1}^\infty  {\frac{{\alpha ^{2n + s}  \pm \beta ^{2n + s} }}{{n(n + 1)C_n }}}  = \frac{{3\pi }}{5}\alpha ^s \cot \left( {\frac{\pi }{5}} \right) \pm \frac{\pi }{5}\beta ^s \cot \left( {\frac{{2\pi }}{5}} \right).
\]
Thus, using the Binet formulas and identity \eqref{eq.vn4pi2d} of Lemma \ref{lem.iqkulim}, we have
\begin{equation}
\sqrt 5 \sum_{n = 1}^\infty  {\frac{{F_{2n + s} }}{{n(n + 1)C_n }}}  = \frac{\pi }{5}(3\alpha ^s  + \beta ^{s + 3} )\cot \left( {\frac{\pi }{5}} \right),
\end{equation}

\begin{equation}
\sum_{n = 1}^\infty  {\frac{{L_{2n + s} }}{{n(n + 1)C_n }}}  = \frac{\pi }{5}(3\alpha ^s  - \beta ^{s + 3} )\cot \left( {\frac{\pi }{5}} \right);
\end{equation}
and hence the stated identities, upon use of Lemma \ref{lem.oakhmtx} with $r=s$.
\end{proof}
\begin{example}
We have
\begin{equation}\label{Cat_Fib}
\sum_{n = 1}^\infty  {\frac{{F_{2n} }}{{n(n + 1)C_n }}}  = \frac{{2\pi }}{5}\sqrt {\frac{\alpha }{{\sqrt 5 }}},\quad\sum_{n = 1}^\infty  {\frac{{L_{2n} }}{{n(n + 1)C_n }}}  = \frac{{2\pi }}{5}\sqrt {\frac{{\alpha ^5 }}{{\sqrt 5 }}},
\end{equation}
\begin{equation}
\sum_{n = 1}^\infty  {\frac{{F_{2n - 1} }}{{n(n + 1)C_n }}}  = \frac{\pi }{5}\sqrt {\frac{{\alpha ^3 }}{{\sqrt 5 }}},\quad\sum_{n = 1}^\infty  {\frac{{L_{2n - 1} }}{{n(n + 1)C_n }}}  =  - (4\beta  + 1)\frac{\pi }{5}\sqrt {\frac{{\alpha ^3 }}{{\sqrt 5 }}}.
\end{equation}
\end{example}
Both series in \eqref{Cat_Fib} appeared recently as a problem proposal \cite{fro1}. 

The first identity in \eqref{Cat_Fib} was also found by Stewart~\cite[Identity~(37b)]{stewart21}.

\section{Some combinatorial identities}

Before closing we state some combinatorial identities (finite and infinite)
which can be inferred from the series studied in the previous sections.
Concerning the finite class we note that similar results were studied by Witu\l a and S\l ota
and their collaborators \cite{witula1,witula2} and more recently by Alzer and Nagy \cite{alzer},
Chu \cite{chu1}, Batir et al. \cite{batir1} and Batir and Sofo \cite{batir2}.
Our first example is an identity derived by Witu\l a and S\l ota \cite{witula1} using a completely different method.

\begin{theorem}\label{mainCI1}
For each $n\geq 1$,
\begin{equation}\label{C_id1}
\sum_{k=1}^n \frac{2^{2k}}{\binom {2k}{k}} = \frac{1}{3}\Big ( \frac{2^{2n+1}}{C_n} - 2\Big ).
\end{equation}
\end{theorem}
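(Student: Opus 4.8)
The plan is to prove \eqref{C_id1} by exhibiting the left-hand side as a telescoping sum. Writing $\binom{2k}{k}=(k+1)C_k$, the summand $\dfrac{2^{2k}}{\binom{2k}{k}}$ equals $\dfrac{2^{2k}}{(k+1)C_k}$, and the form of the right-hand side suggests comparing it with the sequence $b_k:=\dfrac{2^{2k+1}}{C_k}$ for $k\ge0$, which satisfies $b_0=2/C_0=2$. The heart of the matter is the single-step identity
\begin{equation*}
\frac{2^{2k}}{\binom{2k}{k}}=\frac13\bigl(b_k-b_{k-1}\bigr)=\frac13\left(\frac{2^{2k+1}}{C_k}-\frac{2^{2k-1}}{C_{k-1}}\right),\qquad k\ge1,
\end{equation*}
from which \eqref{C_id1} follows at once by summing over $k=1,\dots,n$, the right side collapsing to $\tfrac13(b_n-b_0)=\tfrac13\!\left(\dfrac{2^{2n+1}}{C_n}-2\right)$.

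To verify this single-step identity I would clear denominators and reduce it to a statement about central binomial coefficients alone. Using $C_k=\frac1{k+1}\binom{2k}{k}$ we have $\dfrac{2^{2k+1}}{C_k}=\dfrac{2^{2k+1}(k+1)}{\binom{2k}{k}}$ and $\dfrac{2^{2k-1}}{C_{k-1}}=\dfrac{2^{2k-1}k}{\binom{2k-2}{k-1}}$; substituting the elementary relation $\binom{2k}{k}=\dfrac{2(2k-1)}{k}\binom{2k-2}{k-1}$ rewrites the second term as $\dfrac{2^{2k}(2k-1)}{\binom{2k}{k}}$, so that $3$ times the right-hand side becomes
\[
\frac{2^{2k+1}(k+1)}{\binom{2k}{k}}-\frac{2^{2k}(2k-1)}{\binom{2k}{k}}=\frac{2^{2k}}{\binom{2k}{k}}\bigl(2(k+1)-(2k-1)\bigr)=\frac{3\cdot 2^{2k}}{\binom{2k}{k}},
\]
which is exactly $3$ times the left-hand side. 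This is the only computation in the proof; the single place requiring care is keeping the ratio $\binom{2k}{k}/\binom{2k-2}{k-1}=\frac{2(2k-1)}{k}$ correct, after which everything is formal.

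Equivalently, one can phrase the same argument as an induction on $n$: the base case $n=1$ reads $\dfrac{4}{\binom21}=2=\tfrac13\!\left(\dfrac{8}{C_1}-2\right)$, and the inductive step is precisely the single-step identity with $k=n+1$. Either way the proof is short, and in fact the Catalan generating-function apparatus of the earlier sections is not logically needed for \eqref{C_id1}; the ``inference from the series'' amounts to the observation that $2^{2k+1}/C_k$ telescopes against $2^{2k}/\binom{2k}{k}$. I do not foresee any real obstacle beyond this bookkeeping.
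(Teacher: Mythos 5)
Your proof is correct, and it takes a genuinely different and more elementary route than the paper. You establish the single-step identity $\frac{2^{2k}}{\binom{2k}{k}}=\frac13\bigl(\frac{2^{2k+1}}{C_k}-\frac{2^{2k-1}}{C_{k-1}}\bigr)$ via the ratio $\binom{2k}{k}/\binom{2k-2}{k-1}=\frac{2(2k-1)}{k}$ (which you compute correctly) and then telescope; the base value $b_0=2/C_0=2$ and the check at $n=1$ are both right. The paper instead derives \eqref{C_id1} from the generating function $f(z)$ of the reciprocal Catalan numbers: it expands $2z(1-z^2)^{-5/2}\arcsin(z)$ as a Cauchy product, performs a partial fraction decomposition of the rational part $\frac{16+2z^2}{(4-z^2)^2}$, compares coefficients of $z^{2n}$ to obtain the convolution-type identity $\sum_{k=1}^n(n+1-k)\frac{2^{2k}}{k\binom{2k}{k}}=\frac13\bigl(\frac{2^{2n+2}}{C_n}-4\bigr)-2n$, and then reduces this to \eqref{C_id1} using Parker's formula $\sum_{k=1}^n\frac{2^{2k}}{2k\binom{2k}{k}}=\frac{2^{2n}}{\binom{2n}{n}}-1$. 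Your argument is shorter, self-contained, and needs neither the analytic apparatus nor Parker's formula; what the paper's route buys is consistency with the section's theme (identities ``inferred from the series'') and, as a by-product, the weighted convolution identity above, which is of some independent interest. Your closing remark that the generating-function machinery is not logically needed for \eqref{C_id1} is accurate.
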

\begin{proof}
We work with the function $f(z)$. From \eqref{amde1} we get
\begin{equation}
\sum_{n=0}^\infty \frac{z^{2n}}{C_n} = \frac{2(z^2+8)}{(4-z^2)^2} + \frac{24z \arcsin(z/2)}{(4-z^2)^{5/2}}.
\end{equation}
Now, for all $|z|<1$,
\begin{eqnarray*}
2z (1-z^2)^{-5/2} \arcsin(z) & = & (1-z^2)^{-2} \frac{2z}{\sqrt{1-z^2}}\arcsin(z) \\
& = & \Big (\sum_{n=0}^\infty (n+1) z^2\Big ) \Big (\sum_{n=1}^\infty \frac{(2z)^{2n}}{n\binom {2n}{n}} \Big ) \\
& = & \sum_{n=1}^\infty \sum_{k=1}^n (n+1-k) \frac{2^{2k}}{k\binom {2k}{k}} z^{2n}
\end{eqnarray*}
and by replacing $z$ by $z/2$
\begin{equation*}
16 z (4-z^2)^{-5/2} \arcsin(z/2) = \frac{1}{2} \sum_{n=1}^\infty \sum_{k=1}^n (n+1-k) \frac{2^{2k-2n}}{k\binom {2k}{k}} z^{2n}.
\end{equation*}
Hence,
\begin{equation*}
\sum_{n=0}^\infty \frac{z^{2n}}{C_n} = \frac{2(z^2+8)}{(4-z^2)^2} + \frac{3}{4} \sum_{n=1}^\infty \sum_{k=1}^n
(n+1-k) \frac{2^{-2(n-k)}}{k\binom {2k}{k}} z^{2n}.
\end{equation*}
Next, from the partial fraction decomposition
\begin{equation*}
\frac{16+2z^2}{(4-z^2)^2} = \frac{1}{4(2+z)} + \frac{1}{4(2-z)} + \frac{3}{2(2+z)^2} + \frac{3}{2(2-z)^2}
\end{equation*}
it follows that
\begin{equation*}
\frac{16+2z^2}{(4-z^2)^2} = \sum_{n=0}^\infty (2+3n) 2^{-(2n+1)} z^{2n}, \qquad |z|<2.
\end{equation*}
Comparing the coefficients of $z^{2n}$ and rearranging yields for all $n\geq 1$
\begin{equation*}
\sum_{k=1}^n (n+1-k) \frac{2^{2k}}{k\binom {2k}{k}} = \frac{1}{3}\Big ( \frac{2^{2n+2}}{C_n} - 4 \Big ) - 2n.
\end{equation*}
The identity \eqref{C_id1} follows from
\begin{equation*}
\sum_{k=1}^n \frac{2^{2k}}{2k\binom {2k}{k}} = \frac{2^{2n}}{\binom {2n}{n}} - 1,
\end{equation*}
which is known as Parker's formula (see \cite{witula3}).
\end{proof}

\begin{theorem}\label{mainCI2}
For each $n\geq 0$,
\begin{equation}\label{C_id2}
\sum_{k=0}^n \frac{\binom {2k}{k} \binom {2(n-k)}{n-k}}{(2k+1)(2(n-k)+1)}  = \frac{16^{n}}{(n+1)(2n+1) \binom {2n}{n}}.
\end{equation}
\end{theorem}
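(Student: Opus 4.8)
The plan is to recognize both sides of \eqref{C_id2} as coefficients in the power-series expansion of $\big(\arcsin z/z\big)^2$. First I would start from the classical expansion
\[
\arcsin z=\sum_{k=0}^\infty\frac{\binom{2k}{k}}{4^k(2k+1)}\,z^{2k+1},\qquad |z|<1,
\]
so that $g(z):=\dfrac{\arcsin z}{z}=\sum_{k\ge0}\dfrac{\binom{2k}{k}}{4^k(2k+1)}z^{2k}$. This series converges absolutely on $|z|<1$, so its square may be formed by the Cauchy product:
\[
g(z)^2=\sum_{n=0}^\infty\Bigg(\frac1{4^n}\sum_{k=0}^n\frac{\binom{2k}{k}\binom{2(n-k)}{n-k}}{(2k+1)(2(n-k)+1)}\Bigg)z^{2n}.
\]
Thus the left-hand side of \eqref{C_id2} equals $4^n$ times the coefficient of $z^{2n}$ in $g(z)^2$.

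Second, I would evaluate $g(z)^2=(\arcsin z)^2/z^2$ in a different way, using the series for $Y$. Since $(n+1)C_n=\binom{2n}{n}$, the identity $Y(z)=2\big(\arcsin(\sqrt z/2)\big)^2=\sum_{n\ge1}\frac{z^n}{n^2(n+1)C_n}$ becomes $Y(z)=\sum_{n\ge1}\frac{z^n}{n^2\binom{2n}{n}}$; replacing $z$ by $4z^2$ gives $2(\arcsin z)^2=\sum_{n\ge1}\frac{4^nz^{2n}}{n^2\binom{2n}{n}}$. Dividing by $2z^2$ and reindexing with $n=m+1$ yields
\[
g(z)^2=\frac12\sum_{m\ge0}\frac{4^{m+1}z^{2m}}{(m+1)^2\binom{2m+2}{m+1}}=\sum_{m\ge0}\frac{4^m z^{2m}}{(m+1)(2m+1)\binom{2m}{m}},
\]
where the last equality uses $\binom{2m+2}{m+1}=\frac{2(2m+1)}{m+1}\binom{2m}{m}$. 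Comparing the coefficient of $z^{2n}$ in the two expressions for $g(z)^2$ shows that $4^{-n}$ times the sum on the left of \eqref{C_id2} equals $\frac{4^n}{(n+1)(2n+1)\binom{2n}{n}}$, which is exactly \eqref{C_id2} after multiplying through by $4^n$ and writing $4^n\cdot4^n=16^n$.

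The argument is essentially routine once $g(z)^2$ is identified as the common object; the only steps needing care are the justification of the term-by-term manipulations (immediate on $|z|<1$, where every series in play is absolutely convergent) and the reindexing $n\mapsto m+1$ together with the binomial simplification $\binom{2m+2}{m+1}=\frac{2(2m+1)}{m+1}\binom{2m}{m}$, which is precisely where the factor $(n+1)(2n+1)$ on the right of \eqref{C_id2} is produced. As an alternative to invoking $Y$, one may quote the classical expansion $2(\arcsin z)^2=\sum_{n\ge1}\frac{(2z)^{2n}}{n^2\binom{2n}{n}}$ directly.
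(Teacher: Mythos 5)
Your proof is correct and is exactly the argument the paper intends: the paper's proof is only a one-line sketch ("work with $Y(z)$ in conjunction with the arcsin series"), and you have filled in precisely those details — Cauchy-squaring $\arcsin z/z$ to produce the convolution on the left, and reading off the coefficient of $z^{2n}$ from the closed form $Y(z)=2\bigl(\arcsin(\sqrt z/2)\bigr)^2$ to produce the right-hand side. The reindexing and the simplification $\binom{2m+2}{m+1}=\tfrac{2(2m+1)}{m+1}\binom{2m}{m}$ are handled correctly.
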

\begin{proof}
This identity can be derived straightforwardly working with the function $Y(z)$ in conjunction with the arcsin series
\begin{equation*}
A(z)=\arcsin (z) = \sum_{n=0}^\infty \binom {2n}{n} \frac{z^{2n+1}}{4^n (2n+1)}, \qquad |z|<1.
\end{equation*}
\end{proof}

It is interesting to compare identity \eqref{C_id2} with the following identity derived by Witu\l a et al. \cite{witula2}
and rediscovered by Batir et al. \cite{batir1} applying the Wilf-Zeilberger method:
\begin{equation*}
\sum_{k=0}^n \frac{\binom {2k}{k} \binom {2(n-k)}{n-k}}{(2k+1)}  = \frac{16^{n}}{(2n+1) \binom {2n}{n}}.
\end{equation*}
The results stated in Theorems \ref{thm.o98dcfn} -- \ref{thm.b0yjj17} follow from identities $G_t(z)$, $G_{1t}(z)$, $G_{2t}(z)$ and Y(z), in view of the identity
\begin{equation}\label{eq.d1gh5ds}
\int_0^{\pi /2} {\sin ^{2n + 1} x\,dx}  = \frac{1}{{2n + 1}}\frac{{2^{2n} }}{\binom{2n}n},\quad n\ge 0.
\end{equation}

\begin{theorem}\label{thm.o98dcfn}
If $r$ is a positive integer, then,
\[
\sum_{n = 0}^\infty  {\frac{2}{{(n + 1)(2n + 2r + 1)}}\frac{\binom{2n}n}{\binom{2n + 2r}{n + r}}}  = \frac{1}{{(2r - 1)\binom{2r - 2}{r - 1}}} - \frac{1}{{2^{2r - 1} r}}.
\]
\end{theorem}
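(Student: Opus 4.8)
The plan is to follow the hint given just before the statement: the series is a Catalan series in disguise, so one evaluates it by means of the Wallis-type formula \eqref{eq.d1gh5ds} and the closed form $G_t(x)=1/\cos^2(x/2)$. Since $\binom{2n}{n}/(n+1)=C_n$, and applying \eqref{eq.d1gh5ds} with $n$ replaced by $n+r$ gives
\[
\frac{1}{(2n+2r+1)\binom{2n+2r}{n+r}}=\frac{1}{4^{n+r}}\int_0^{\pi/2}\sin^{2n+2r+1}x\,dx,
\]
the $n$-th term of the series equals $2C_n\,4^{-(n+r)}\int_0^{\pi/2}\sin^{2n+2r+1}x\,dx$. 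All terms are nonnegative, so Tonelli's theorem (or monotone convergence) licenses the interchange of summation and integration, and the sum becomes
\[
\frac{2}{4^r}\int_0^{\pi/2}\sin^{2r+1}x\,\Big(\sum_{n=0}^\infty C_n\frac{\sin^{2n}x}{4^n}\Big)\,dx=\frac{2}{4^r}\int_0^{\pi/2}\frac{\sin^{2r+1}x}{\cos^2(x/2)}\,dx,
\]
where the inner sum is $G_t(x)=1/\cos^2(x/2)$ on $(0,\pi/2)$; the endpoint $x=\pi/2$ is harmless since $\sum_n C_n/4^n=2$ converges.

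Next I would make the integral purely algebraic. Writing $1/\cos^2(x/2)=2/(1+\cos x)$ and $\sin^{2r+1}x=\sin x\,(1-\cos x)^r(1+\cos x)^r$, the integrand becomes $2\sin x\,(1-\cos x)^r(1+\cos x)^{r-1}$, and the substitution $u=\cos x$ turns the sum into
\[
\frac{4}{4^r}\int_0^1(1-u)^r(1+u)^{r-1}\,du.
\]
The affine change $u=2t-1$ (so $1-u=2(1-t)$, $1+u=2t$) then contributes a factor $2^r\cdot 2^{r-1}\cdot 2=2^{2r}$, which cancels $4^{-r}$, leaving the series equal to $4A$, where $A:=\int_{1/2}^1 t^{r-1}(1-t)^r\,dt$.

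It remains to evaluate $A$, and this is the step that needs the one genuinely clever move. Integrating by parts with $dg=t^{r-1}\,dt$, $g=t^r/r$, the boundary term at $t=1$ vanishes and the term at $t=1/2$ contributes $-1/(r4^r)$, giving $A=-\tfrac{1}{r4^r}+\int_{1/2}^1 t^r(1-t)^{r-1}\,dt$. The trick is to notice that the reflection $t\mapsto 1-t$ identifies $\int_{1/2}^1 t^r(1-t)^{r-1}\,dt=\int_0^{1/2}t^{r-1}(1-t)^r\,dt$, which together with $A$ sums to the \emph{complete} Beta integral $B(r,r+1)=\int_0^1 t^{r-1}(1-t)^r\,dt=\frac{(r-1)!\,r!}{(2r)!}$. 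Hence $A=-\tfrac{1}{r4^r}+B(r,r+1)-A$, so $2A=B(r,r+1)-\tfrac{1}{r4^r}$ and the series equals $4A=2B(r,r+1)-\tfrac{2}{r4^r}$. Finally, bookkeeping with factorials gives $2B(r,r+1)=\frac{2(r-1)!\,r!}{(2r)!}=\frac{((r-1)!)^2}{(2r-1)!}=\frac{1}{(2r-1)\binom{2r-2}{r-1}}$ and $\frac{2}{r4^r}=\frac{1}{r\,2^{2r-1}}$, which is exactly the claimed closed form. The main obstacles are thus the justification of the term-by-term integration (routine, by positivity) and, more substantively, spotting that integration by parts plus the reflection symmetry collapses the \emph{incomplete} Beta integral $A$ into an elementary expression rather than an unevaluable partial Beta function.
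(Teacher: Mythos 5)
Your proof is correct and follows exactly the route the paper indicates: insert the Wallis-type integral \eqref{eq.d1gh5ds} with $n\mapsto n+r$, interchange sum and integral by positivity, and recognize the inner sum as $G_t(x)=1/\cos^2(x/2)$. Your evaluation of the resulting integral via the incomplete Beta integral and the reflection trick is valid but more elaborate than necessary: since $\sin^2x/\cos^2(x/2)=4\sin^2(x/2)=2(1-\cos x)$, the integrand is just $2(1-\cos x)\sin^{2r-1}x$, and the two pieces are handled by \eqref{eq.d1gh5ds} again (with $n=r-1$) and by the elementary antiderivative $\sin^{2r}x/(2r)$, which produces the two terms of the stated closed form directly.
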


\begin{theorem}
If $r$ is a non-negative integer, then,
\[
\sum_{n = 1}^\infty  {\frac{1}{{(2n + 1)(2n + 2r + 1)}}\frac{\binom{4n}{2n}}{\binom{2n + 2r}{n + r}}} \frac{1}{{2^{2n} }} = \frac{1}{{2^{2r - 1} }}\int_0^{\pi /2} {\sin ^{2r} x\sin (x/2)\,dx}  - \frac{1}{{(2r + 1)\binom{2r}{r}}}.
\]

\end{theorem}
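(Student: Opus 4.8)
The plan is to use the integral-insertion technique announced just before Theorem~\ref{thm.o98dcfn}: identity~\eqref{eq.d1gh5ds} lets us replace the reciprocal central binomial coefficient $1/\binom{2n+2r}{n+r}$ in the summand by an integral of a power of $\sin$, after which the resulting power series is recognised as the trigonometric version $G_{2t}$ of the generating function of the even-indexed Catalan numbers. Concretely, I would apply~\eqref{eq.d1gh5ds} with $n$ replaced by $n+r$ to get
\[
\frac{1}{\binom{2n+2r}{n+r}}=\frac{2n+2r+1}{2^{2n+2r}}\int_0^{\pi/2}\sin^{2n+2r+1}x\,dx,
\]
and substitute this into the left-hand summand. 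The factor $2n+2r+1$ then cancels the one in the denominator, and since $\tfrac{1}{2n+1}\binom{4n}{2n}=C_{2n}$ the general term collapses to
\[
\frac{1}{2^{2r}}\,\frac{C_{2n}}{4^{2n}}\int_0^{\pi/2}\sin^{2r+1}x\,\sin^{2n}x\,dx.
\]

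Next I would interchange summation and integration --- legitimate by Tonelli's theorem, since all terms are nonnegative and $\sum_{n\ge1}C_{2n}\sin^{2n}x/4^{2n}$ converges even at $x=\pi/2$ (to $\sqrt2$) --- and replace $\sum_{n\ge1}C_{2n}\sin^{2n}x/4^{2n}$ by $G_{2t}(x)=\frac{1}{\cos(x/2)}-1$, which is valid for $0\le x\le\pi/2$. The left-hand side of the theorem then equals
\[
\frac{1}{2^{2r}}\int_0^{\pi/2}\sin^{2r+1}x\Bigl(\frac{1}{\cos(x/2)}-1\Bigr)dx,
\]
and I would split this into two pieces. The $-1$ piece gives $-\tfrac{1}{2^{2r}}\int_0^{\pi/2}\sin^{2r+1}x\,dx$, which by~\eqref{eq.d1gh5ds} (now with $n=r$) is precisely $-\tfrac{1}{(2r+1)\binom{2r}{r}}$, the second term of the claimed formula. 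For the other piece I would use $\sin x=2\sin(x/2)\cos(x/2)$ to rewrite $\sin^{2r+1}x/\cos(x/2)=2\sin^{2r}x\,\sin(x/2)$, so that
\[
\frac{1}{2^{2r}}\int_0^{\pi/2}\frac{\sin^{2r+1}x}{\cos(x/2)}\,dx=\frac{1}{2^{2r-1}}\int_0^{\pi/2}\sin^{2r}x\,\sin(x/2)\,dx,
\]
which is exactly the first term on the right-hand side; adding the two contributions finishes the proof.

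I do not foresee a real obstacle: the argument is essentially routine. The only points that call for a little care are the bookkeeping of the powers of $2$ when~\eqref{eq.d1gh5ds} is applied at the shifted index $n+r$, and the justification of the term-by-term integration, both of which are handled as indicated above. In particular, no closed form for $\int_0^{\pi/2}\sin^{2r}x\,\sin(x/2)\,dx$ is needed --- it is deliberately left unevaluated, exactly as in the statement.
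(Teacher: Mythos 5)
Your proof is correct and is precisely the argument the paper intends: the paper gives no detailed proof, stating only that Theorems \ref{thm.o98dcfn}--\ref{thm.b0yjj17} follow from $G_{2t}(z)$ (among others) together with identity \eqref{eq.d1gh5ds}, and your substitution, interchange, and split reproduce exactly that route. The only quibble is your parenthetical claim that $\sum_{n\ge1}C_{2n}\sin^{2n}x/4^{2n}$ equals $\sqrt2$ at $x=\pi/2$; the correct value is $G_{2t}(\pi/2)=\sqrt2-1$ (the paper's own example contains the same slip), but this aside plays no role in the argument.
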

In particular,
\begin{equation}
\sum_{n = 1}^\infty  {\frac{1}{{(2n + 1)^2}}\frac{\binom{4n}{2n}}{\binom{2n}{n}}} \frac{1}{{2^{2n} }} =3-2\sqrt 2.
\end{equation}
\begin{theorem}
If $r$ is a positive integer, then,
\[
\sum_{n = 1}^\infty  {\frac{1}{{n(2n + 2r - 1)}}\frac{\binom{4n - 2}{2n - 1}}{\binom{2n + 2r - 2}{n + r - 1}}} \frac{1}{{2^{2n} }} = \frac{1}{{(2r - 1)\binom{2r - 2}{r - 1}}} - \frac{1}{{2^{2r - 2} }}\int_0^{\pi /2} {\sin ^{2r - 1} x\cos (x/2)\,dx}.
\]
\end{theorem}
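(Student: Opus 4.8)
The plan is to evaluate the series by recognizing it, after clearing the reciprocal binomial coefficient, as a single integral against the trigonometric generating function $G_{1t}$. Recall that $G_{1t}(x) = \sum_{n\ge1} C_{2n-1}\sin^{2n-1}x/4^{2n-1} = 4\sin^2(x/4)/\sin x$ holds for $0\le x\le\pi/2$, the endpoint $x=\pi/2$ being included since $G_1(1)=2-\sqrt2$ converges. The first step is to use \eqref{eq.d1gh5ds} with the substitution $n\mapsto n+r-1$, which reads $1/\binom{2n+2r-2}{n+r-1} = \frac{2n+2r-1}{2^{2n+2r-2}}\int_0^{\pi/2}\sin^{2n+2r-1}x\,dx$. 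The factor $2n+2r-1$ then cancels the identical factor in the denominator $n(2n+2r-1)$, and since $\binom{4n-2}{2n-1}=2nC_{2n-1}$, collecting the powers of $2$ reduces the $n$-th term of the series to exactly
\[
\frac{1}{2^{2r-1}}\cdot\frac{C_{2n-1}}{4^{2n-1}}\int_0^{\pi/2}\sin^{2r}x\,\sin^{2n-1}x\,dx .
\]

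Next I would interchange the summation and the integration; this is legitimate by Tonelli's theorem, because every term of the series is nonnegative on $[0,\pi/2]$. Summing the inner series by means of $G_{1t}$ gives
\[
\text{LHS} = \frac{1}{2^{2r-1}}\int_0^{\pi/2}\sin^{2r}x\cdot\frac{4\sin^2(x/4)}{\sin x}\,dx .
\]
Applying the half-angle identity $4\sin^2(x/4)=2-2\cos(x/2)$ and simplifying $\sin^{2r}x/\sin x=\sin^{2r-1}x$ splits this into
\[
\text{LHS} = \frac{1}{2^{2r-2}}\int_0^{\pi/2}\sin^{2r-1}x\,dx \;-\; \frac{1}{2^{2r-2}}\int_0^{\pi/2}\sin^{2r-1}x\cos(x/2)\,dx .
\]
Finally, the first of these integrals is evaluated in closed form by \eqref{eq.d1gh5ds} with exponent $2r-1=2(r-1)+1$, giving $\int_0^{\pi/2}\sin^{2r-1}x\,dx = \frac{1}{2r-1}\cdot\frac{2^{2r-2}}{\binom{2r-2}{r-1}}$, so the first term becomes $\frac{1}{(2r-1)\binom{2r-2}{r-1}}$, while the second term is left as written; this reproduces the stated identity.

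The only genuine subtlety is justifying the exchange of summation and integration together with the use of $G_{1t}$ right up to the boundary $x=\pi/2$. Both issues are dispatched at once: nonnegativity of the integrand legitimizes the exchange via Tonelli, and the convergence of $G_1$ at $z=1$ covers the endpoint. Everything else — the cancellation of the factor $2n+2r-1$, the relation $\binom{4n-2}{2n-1}=2nC_{2n-1}$, the bookkeeping of powers of $2$, and the single half-angle substitution — is routine.
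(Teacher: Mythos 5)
Your proof is correct and follows exactly the route the paper indicates (it only sketches it in one sentence): convert the reciprocal binomial coefficient into a Wallis-type integral via \eqref{eq.d1gh5ds}, interchange sum and integral, and sum the resulting series with $G_{1t}$. All the bookkeeping checks out, including the power-of-two count, the identity $\binom{4n-2}{2n-1}=2nC_{2n-1}$, and the final evaluation of $\int_0^{\pi/2}\sin^{2r-1}x\,dx$; your Tonelli justification and the endpoint remark are welcome additions the paper omits.
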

In particular,
\begin{equation}
\sum_{n = 1}^\infty  {\frac{1}{{n(2n + 1)}}\frac{\binom{4n - 2}{2n - 1}}{\binom{2n}{n}}} \frac{1}{{2^{2n} }} =\frac{\sqrt 2 - 1}3.
\end{equation}

\begin{theorem}\label{thm.b0yjj17}
If $r$ is an integer and $r\ge -1$, then,
\[
\sum_{n = 1}^\infty  {\frac{1}{{n^2 (2n + 2r + 1)}}} \frac{{2^{2n} }}{\binom{2n}n}\frac{{2^{2n} }}{\binom{2n + 2r}{n + r}} = \frac{1}{{2^{2r - 1} }}\int_0^{\pi /2} {x^2 \sin ^{2r + 1} x\,dx}.
\]

\end{theorem}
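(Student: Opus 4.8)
The plan is to convert the left-hand side into a single integral by means of the Wallis-type formula \eqref{eq.d1gh5ds} and then recognize the resulting power series as the trigonometric incarnation of $Y(z)$. First I would rewrite the summand of $Y$: since $C_n=\binom{2n}{n}/(n+1)$, one has $\dfrac{1}{n^2(n+1)C_n}=\dfrac{1}{n^2\binom{2n}{n}}$, so that
\[
Y(z)=\sum_{n=1}^\infty\frac{z^n}{n^2\binom{2n}n}=2\bigl(\arcsin(\sqrt z/2)\bigr)^2 .
\]
Substituting $z=4\sin^2 x$ with $0\le x\le\pi/2$ (so $\sqrt z/2=\sin x$ and $\arcsin(\sin x)=x$) gives the trigonometric form
\[
\sum_{n=1}^\infty\frac{(4\sin^2 x)^n}{n^2\binom{2n}n}=2x^2,\qquad 0\le x\le\pi/2 .
\]

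Next I would use \eqref{eq.d1gh5ds} with $n$ replaced by $n+r$ (legitimate since $n+r\ge 0$ whenever $n\ge 1$ and $r\ge -1$), which rearranges to
\[
\frac{1}{2n+2r+1}\,\frac{2^{2n}}{\binom{2n+2r}{n+r}}=\frac{1}{2^{2r}}\int_0^{\pi/2}\sin^{2n+2r+1}x\,dx .
\]
Inserting this into the left-hand side of the claimed identity and combining with the factor $2^{2n}/\binom{2n}{n}$ (noting $2^{2n}\sin^{2n}x=(4\sin^2 x)^n$) yields
\[
\sum_{n=1}^\infty\frac{1}{n^2(2n+2r+1)}\frac{2^{2n}}{\binom{2n}n}\frac{2^{2n}}{\binom{2n+2r}{n+r}}
=\frac{1}{2^{2r}}\int_0^{\pi/2}\sin^{2r+1}x\sum_{n=1}^\infty\frac{(4\sin^2 x)^n}{n^2\binom{2n}n}\,dx .
\]
Applying the trigonometric form of $Y$ to the inner sum turns the integrand into $2x^2\sin^{2r+1}x$, and dividing by $2^{2r}$ gives exactly $\dfrac{1}{2^{2r-1}}\displaystyle\int_0^{\pi/2}x^2\sin^{2r+1}x\,dx$, as claimed.

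The only point requiring care is the interchange of summation and integration. Here all terms are nonnegative on $[0,\pi/2]$ (indeed $z=4\sin^2 x\ge 0$, and each power $\sin^{2n+2r+1}x$ is nonnegative on that interval since $2n+2r+1\ge 1$ for $n\ge 1$, $r\ge -1$), so Tonelli's theorem justifies the interchange unconditionally; it then suffices to observe that the resulting integral $\int_0^{\pi/2}x^2\sin^{2r+1}x\,dx$ is finite, which for $r=-1$ uses that the factor $2x^2$ supplied by $Y_t$ cancels the mild singularity of $\sin^{-1}x$ at $x=0$. This simultaneously shows that the original series converges, and it also reconciles the endpoint value $Y(4)=2(\pi/2)^2=\pi^2/2$ with the known evaluation $\sum_{n\ge 1}4^n/(n^2(n+1)C_n)=\pi^2/2$. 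Everything else is the substitution $z=4\sin^2 x$ and the Wallis integral \eqref{eq.d1gh5ds}, both routine.
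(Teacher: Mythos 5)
Your proof is correct and follows exactly the route the paper intends: the paper merely remarks that Theorem \ref{thm.b0yjj17} ``follows from $Y(z)$ in view of \eqref{eq.d1gh5ds}'', and you have supplied precisely those details — the trigonometric form $\sum_{n\ge1}(4\sin^2x)^n/(n^2\binom{2n}{n})=2x^2$, the shift $n\mapsto n+r$ in the Wallis integral (valid since $n+r\ge0$), and the Tonelli justification of the interchange, including the integrability check at $r=-1$. Nothing to correct.
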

In particular,
\begin{equation}
\sum_{n = 1}^\infty  {\left( {\frac{{2^{2n} }}{{n\binom{2n}n}}} \right)^2 \frac{1}{{(2n + 1)}}}  = 2\pi  - 4,
\end{equation}

\begin{equation}
\sum_{n = 1}^\infty  {\left( {\frac{{2^{2n} }}{n}} \right)^2 \frac{1}{{\binom{2n}n\binom{2n - 2}{n - 1}}}\frac{1}{{2n - 1}}}  =  - \frac{7}{2}\zeta (3) + 2\pi G,
\end{equation}
where
\[
G = \sum_{i = 0}^\infty  {\frac{{( - 1)^i }}{{(2i + 1)^2 }}}
\]
is Catalan's constant.

The trigonometric form of $A(z)$ is
\begin{equation}\label{eq.sff956y}
A_t (z) = \sum_{n = 0}^\infty  {\binom{2n}n\frac{{\sin ^{2n + 1} z}}{{4^n (2n + 1)}}}  = z.
\end{equation}
\begin{lemma}[Lewin {\cite[Identity A.3.3.13]{lewin81}}]\label{lem.ofcbhbu}
For real or complex $y$,
\[
\int_0^y {\frac{x}{{\sin x}}dx}  = Cl_2 (y) + Cl_2 (\pi  - y) + y\log \left( {\tan \left( {\frac{y}{2}} \right)} \right),
\]
where $Cl_2(z)$ is the Clausen function defined by
\[
Cl_2 (z) =  - \int_0^z {\log |2\sin (x/2)|\,dx}.
\]
\end{lemma}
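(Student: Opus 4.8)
The plan is to collapse the integral onto the Clausen function by a single integration by parts together with the standard log-sine evaluations. First I would use the elementary antiderivative $\frac{d}{dx}\log\tan(x/2)=\frac{1}{\sin x}$ to integrate by parts:
\[
\int_0^y \frac{x}{\sin x}\,dx = y\log\tan\left(\frac{y}{2}\right) - \int_0^y \log\tan\left(\frac{x}{2}\right)dx,
\]
the boundary term at $x=0$ vanishing since $x\log\tan(x/2)\sim x\log(x/2)\to 0$. This already produces the term $y\log(\tan(y/2))$ from the statement, so it remains only to show $-\int_0^y\log\tan(x/2)\,dx = Cl_2(y)+Cl_2(\pi-y)$.

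Next I would split $\log\tan(x/2)=\log\sin(x/2)-\log\cos(x/2)$ and normalise each factor to the form $\log|2\sin(\cdot)|$ used in the definition of $Cl_2$. For the sine part, $\log\sin(x/2)=\log|2\sin(x/2)|-\log 2$ gives $-\int_0^y\log\sin(x/2)\,dx = Cl_2(y)+y\log 2$ directly. For the cosine part I would write $\cos(x/2)=\sin((\pi-x)/2)$, so $\log\cos(x/2)=\log|2\sin((\pi-x)/2)|-\log 2$, and the substitution $u=\pi-x$ turns $\int_0^y\log|2\sin((\pi-x)/2)|\,dx$ into $\int_{\pi-y}^{\pi}\log|2\sin(u/2)|\,du = -Cl_2(\pi)+Cl_2(\pi-y)$. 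Invoking the classical value $Cl_2(\pi)=-\int_0^{\pi}\log|2\sin(u/2)|\,du=0$, this equals $Cl_2(\pi-y)$, and the cosine part therefore contributes $Cl_2(\pi-y)-y\log 2$.

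Adding the two contributions, the $y\log 2$ terms cancel and $-\int_0^y\log\tan(x/2)\,dx = Cl_2(y)+Cl_2(\pi-y)$, which combined with the boundary term from the integration by parts yields exactly $Cl_2(y)+Cl_2(\pi-y)+y\log\tan(y/2)$. For real $y\in(0,\pi)$ all integrals converge and every step is justified; the extension to the full generality (real or complex $y$) asserted in the statement then follows by analytic continuation, both sides being analytic in $y$ once $Cl_2$ is read off its Fourier series $Cl_2(\theta)=\sum_{n\ge1}\frac{\sin(n\theta)}{n^2}$.

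I do not anticipate a genuine difficulty here: this is Lewin's textbook identity, and the only points needing a little care are the vanishing of the boundary term at the origin, the evaluation $Cl_2(\pi)=0$, and the bookkeeping of the $\pm y\log 2$ terms. A fully self-contained alternative would be to expand $1/\sin x$ into its Fourier series on $(0,\pi)$ and integrate termwise, but the integration-by-parts route is shorter and plugs straight into the definition of $Cl_2$ that the paper has just recorded.
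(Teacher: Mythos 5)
Your argument is correct and complete. Note that the paper does not actually prove this lemma at all --- it is imported verbatim from Lewin \cite[Identity A.3.3.13]{lewin81} --- so there is no in-paper proof to compare against; what you have written is a genuine, self-contained derivation of the cited identity. Each step checks out: $\frac{d}{dx}\log\tan(x/2)=1/\sin x$ gives the integration by parts with vanishing boundary term at the origin; the split $\log\tan(x/2)=\log\sin(x/2)-\log\cos(x/2)$ together with $\cos(x/2)=\sin((\pi-x)/2)$ and the substitution $u=\pi-x$ produces exactly $Cl_2(y)+Cl_2(\pi-y)$ once you invoke $Cl_2(\pi)=0$ (equivalently $\int_0^{\pi}\log(2\sin(u/2))\,du=0$, the classical log-sine evaluation); and the two $y\log 2$ terms cancel as you say. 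Your closing remark about ``real or complex $y$'' is the right caveat: the absolute-value definition of $Cl_2$ given in the paper only makes literal sense for real arguments, and for real $y$ the identity as stated is cleanest on $(0,\pi)$, where $\tan(y/2)>0$; the broader claim requires reading $Cl_2$ through its series $\sum_{n\ge1}\sin(n\theta)/n^2$ and choosing branches of the logarithm consistently, which is a looseness inherited from Lewin's formulation rather than a defect of your proof.
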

The following values are known, among others, see Lewin~\cite[p.291, Section A.2.4]{lewin81}:
\[
Cl_2 (\pi/2)=G=-Cl_2 (3\pi/2).
\]
Our next result, Theorem \ref{thm.vm3zcb8}, is a straightforward consequence of $A_t(z)$, given in~\eqref{eq.sff956y}, upon application of Lemma~\ref{lem.ofcbhbu} and the well known result:
\begin{equation}\label{eq.amu7tsd}
\int_0^{\pi /2} {\sin ^{2n} x\,dx}  = \frac{\pi }{{2^{2n + 1} }}{\binom{2n} n}\quad n = 0,1,2, \ldots
\end{equation}
\begin{theorem}\label{thm.vm3zcb8}
If $r$ is an integer and $r\ge 0$, then,
\[
\sum_{n = 0}^\infty  {\dfrac{\binom{2n}n\binom{2n + 2r}{n + r}}{{2^{4n + 2r + 1} (2n + 1)}}}  = \frac1{\pi }\int_0^{\pi /2} {x\sin ^{2r - 1} x\,dx}.
\]
\end{theorem}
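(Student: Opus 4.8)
The plan is to combine the trigonometric $\arcsin$ series \eqref{eq.sff956y} with the Wallis-type formula \eqref{eq.amu7tsd}. First I would rewrite the general term of the left-hand sum by splitting the power of $2$ as $2^{4n+2r+1} = 2\cdot 4^{n}\cdot 4^{n+r}$, so that
\[
\frac{\binom{2n}n\binom{2n + 2r}{n + r}}{2^{4n + 2r + 1}(2n + 1)} = \frac12\cdot\frac{\binom{2n}n}{4^{n}(2n+1)}\cdot\frac{\binom{2n+2r}{n+r}}{4^{n+r}}.
\]
Since $r\ge 0$ we have $n+r\ge 0$ for every $n\ge 0$, so \eqref{eq.amu7tsd} applies with index $n+r$ and turns the last factor into $\frac{2}{\pi}\int_0^{\pi/2}\sin^{2n+2r}x\,dx$. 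Hence the general term equals $\dfrac1\pi\displaystyle\int_0^{\pi/2}\frac{\binom{2n}n\sin^{2n}x}{4^{n}(2n+1)}\,\sin^{2r}x\,dx$.

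Next I would sum over $n\ge 0$ and interchange summation and integration. After the interchange the inner sum is, by \eqref{eq.sff956y} divided by $\sin x$,
\[
\sum_{n = 0}^\infty \frac{\binom{2n}n\sin^{2n}x}{4^{n}(2n+1)} = \frac{A_t(x)}{\sin x} = \frac{x}{\sin x},\qquad 0<x\le \pi/2,
\]
so the left-hand side of the theorem becomes $\dfrac1\pi\displaystyle\int_0^{\pi/2}\frac{x}{\sin x}\sin^{2r}x\,dx = \dfrac1\pi\displaystyle\int_0^{\pi/2}x\sin^{2r-1}x\,dx$, which is the asserted identity. For $r\ge 1$ the simplification $\frac{x}{\sin x}\sin^{2r}x = x\sin^{2r-1}x$ is immediate; for $r=0$ one reads $\sin^{2r-1}x$ as $1/\sin x$, and then Lemma \ref{lem.ofcbhbu} evaluated at $y=\pi/2$ together with $Cl_2(\pi/2)=G$ gives $\int_0^{\pi/2}\frac{x}{\sin x}dx = 2G$, yielding in particular $\sum_{n\ge 0}\binom{2n}n^2/\bigl(2^{4n+1}(2n+1)\bigr) = 2G/\pi$.

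The only point needing a word of care is the interchange of sum and integral, in particular its validity up to the endpoint $x=\pi/2$ where $\sin x = 1$. This is handled by the Weierstrass $M$-test: on $[0,\pi/2]$ one has $0\le \frac{\binom{2n}n\sin^{2n}x}{4^{n}(2n+1)}\le \frac{\binom{2n}n}{4^{n}(2n+1)}$, and since $\binom{2n}n/4^{n}=O(n^{-1/2})$ the majorant series $\sum_n \binom{2n}n/(4^{n}(2n+1))$ converges, so the series of functions converges uniformly on $[0,\pi/2]$ and termwise integration is legitimate. Everything else is routine: the arithmetic with powers of $2$, recognition of the $\arcsin$ series, and the closed-form evaluation of $\int_0^{\pi/2}x\sin^{2r-1}x\,dx$ for a given $r$ (repeated integration by parts for $r\ge 1$, Lemma \ref{lem.ofcbhbu} for $r=0$).
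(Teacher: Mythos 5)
Your proposal is correct and follows exactly the route the paper intends: represent $\binom{2n+2r}{n+r}/4^{n+r}$ via the Wallis integral \eqref{eq.amu7tsd}, interchange sum and integral, recognize the $\arcsin$ series $A_t(z)$ from \eqref{eq.sff956y} to get $x/\sin x$, and invoke Lemma \ref{lem.ofcbhbu} for the $r=0$ case. Your explicit justification of the termwise integration via the $M$-test is a welcome detail the paper omits.
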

In particular,
\begin{equation}
\sum_{n = 0}^\infty  {\frac{{\binom{2n}n}^2}{{2^{4n + 1} (2n + 1)}}}  = \frac{2G}\pi,
\end{equation}

\begin{equation}
\sum_{n = 0}^\infty  {\frac{{\binom{2n}n\binom{2n + 2}{n + 1}}}{{2^{4n + 3} (2n + 1)}}}  = \frac{1}{\pi }.
\end{equation}
From the function $W_t(z)$ and identities~\eqref{eq.d1gh5ds} and~\eqref{eq.amu7tsd} comes the next result.
\begin{theorem}
If $r$ is an integer and $r\ge -1$, then,
\[
\sum_{n = 0}^\infty  {\dfrac{{\binom{2n}n\binom{2n + 2r + 2}{n + r + 1}}}{{(n + 1)(2n + 1)2^{4n + 2r + 4} }}}  = \frac{1}{\pi }\int_0^{\pi /2} {z\sin ^{2r + 1} zdz}  - \frac{1}{{2^{2r + 1} }}\binom{2r}r + \frac{1}{{\pi (2r + 1)}}.
\]

\end{theorem}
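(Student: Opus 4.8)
The plan is to integrate the trigonometric generating function $W_t(z)$ against $\sin^{2r}z$ over $[0,\pi/2]$ and to recognize the two resulting expressions as the two sides of the claimed identity. Recall that
\[
W_t(z)=\sum_{n=0}^\infty \frac{C_n}{2^{2n+1}(2n+1)}\sin^{2n+2}z = z\sin z-2\sin^2\!\Big(\frac z2\Big)=z\sin z-1+\cos z .
\]
Multiplying the series by $\sin^{2r}z$ and integrating term by term, the $n$-th summand contributes $\frac{C_n}{2^{2n+1}(2n+1)}\int_0^{\pi/2}\sin^{2(n+r+1)}z\,dz$. Here identity~\eqref{eq.amu7tsd}, applied with $n$ replaced by $n+r+1$, evaluates the sine integral as $\pi\,2^{-(2n+2r+3)}\binom{2n+2r+2}{n+r+1}$; after writing $C_n=\binom{2n}{n}/(n+1)$ and combining the powers of $2$ (note $2^{2n+1}\cdot 2^{2n+2r+3}=2^{4n+2r+4}$), the series side becomes exactly $\pi$ times the left-hand side of the theorem.

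For the other side I would evaluate $\int_0^{\pi/2}W_t(z)\sin^{2r}z\,dz$ in closed form by splitting it, via $W_t(z)=z\sin z-1+\cos z$, into $\int_0^{\pi/2}z\sin^{2r+1}z\,dz-\int_0^{\pi/2}\sin^{2r}z\,dz+\int_0^{\pi/2}\cos z\,\sin^{2r}z\,dz$. The first integral is left untouched; the second equals $\frac{\pi}{2^{2r+1}}\binom{2r}{r}$ by~\eqref{eq.amu7tsd}; and the third is elementary, $\int_0^{\pi/2}\cos z\,\sin^{2r}z\,dz=\big[\tfrac{\sin^{2r+1}z}{2r+1}\big]_0^{\pi/2}=\tfrac1{2r+1}$. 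Equating the two evaluations and dividing by $\pi$ gives precisely the stated formula.

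The main point requiring care is the term-by-term integration; since on $[0,\pi/2]$ every summand $\frac{C_n}{2^{2n+1}(2n+1)}\sin^{2n+2r+2}z$ is nonnegative, the monotone convergence theorem legitimises the interchange for all $r\ge-1$. One further subtlety arises only for $r=-1$: then the split above has divergent second and third pieces, so instead one keeps $W_t(z)\sin^{-2}z=\frac{z}{\sin z}-\frac1{1+\cos z}$ together and computes $\int_0^{\pi/2}W_t(z)\sin^{-2}z\,dz=2G-1$ using Lemma~\ref{lem.ofcbhbu} at $y=\pi/2$ (with $Cl_2(\pi/2)=G$) together with $\int_0^{\pi/2}\frac{dz}{1+\cos z}=\big[\tan(z/2)\big]_0^{\pi/2}=1$; this matches the right-hand side at $r=-1$ once $\binom{-2}{-1}$ is read as $0$. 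Apart from this edge case, the argument is entirely routine algebra.
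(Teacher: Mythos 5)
Your proof is correct and follows essentially the same route the paper intends: multiply $W_t(z)$ by $\sin^{2r}z$, integrate over $[0,\pi/2]$ term by term using \eqref{eq.amu7tsd}, and evaluate the closed form of $W_t$ directly. Your explicit treatment of the $r=-1$ case (keeping $\frac{1-\cos z}{\sin^2 z}=\frac{1}{1+\cos z}$ together and invoking $Cl_2(\pi/2)=G$) is a welcome added detail that the paper leaves implicit.
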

In particular,
\begin{equation}
\sum_{n = 0}^\infty  {\frac{{\binom{2n}n}^2}{{2^{4n + 2}(n + 1)(2n + 1)}}}  = \frac{2G - 1}\pi,
\end{equation}

\begin{equation}
\sum_{n = 0}^\infty  {\frac{{\binom{2n}n\binom{2n + 2}{n + 1}}}{{2^{4n + 4}(n + 1)(2n + 1)}}}  = \frac{2}{\pi } - \frac12.
\end{equation}

\end{document}